\newtheorem{thm}{Theorem}[section]  
\newtheorem{lem}[thm]{Lemma}
\newtheorem{cor}[thm]{Corollary}
\newtheorem{prop}[thm]{Proposition}
\newtheorem{question}[thm]{Question}
\newtheorem*{case1}{Case~1}
\newtheorem*{case2}{Case~2}
\newtheorem*{claim}{Claim}
\newtheorem*{claim1}{Claim~1}
\newtheorem*{claim2}{Claim~2}
\newtheorem*{claim3}{Claim~3}
\theoremstyle{remark}
\newtheorem*{rem}{Remark}
\def\A{\mathcal{A}}
\def\B{\mathcal{B}}
\def\Z{\mathbb{Z}}
\DeclareMathOperator{\AGL}{AGL}
\DeclareMathOperator{\aut}{Aut}
\DeclareMathOperator{\atom}{Atom}
\DeclareMathOperator{\cay}{Cay}
\DeclareMathOperator{\cS}{\mathcal{S}}
\DeclareMathOperator{\dih}{Dih}
\DeclareMathOperator{\iso}{Iso}
\DeclareMathOperator{\orb}{Orb}
\DeclareMathOperator{\PGaL}{P\Gamma L}
\DeclareMathOperator{\PGL}{PGL}
\DeclareMathOperator{\PSL}{PSL}
\DeclareMathOperator{\rk}{rank}
\DeclareMathOperator{\sym}{Sym}
\DeclareMathOperator{\soc}{soc}
\DeclareMathOperator{\Span}{Span}
\DeclareMathOperator{\Sup}{Sup}
\DeclareMathOperator{\stab}{Stab}
\title{Dihedral groups of square-free order are DCI-groups}
\author[I.~Kov\'acs]{Istv\'an Kov\'acs}
\address[I.~Kov\'acs]{University of Primorska, UP IAM and UP FAMNIT, Muzejski trg 2 and Glagolja\v{s}ka 8, SI-6000 Koper, Slovenia}
\email[I.~Kov\'acs]{istvan.kovacs@upr.si}
\author[G.~Somlai]{G\'abor Somlai}
\address[G.~Somlai]{E\"otv\"os L\'or\'and University, Department of Algebra and Number Theory, 
P\'azm\'any P\'eter s\'et\'any 1/C, H-1117 Budapest, Hungary.}
\email[G.~Somlai]{gabor.somlai@ttk.elte.hu}
\thanks{The first author was supported by the Slovenian Research and Innovation Agency 
(ARIS), research program P1-0285 and research projects J1-50000 and 
N1-0391, and the second author was supported by OTKA grants 138596 and FK 142993.}
\keywords{Cayley graph, DCI-group, dihedral group, Schur ring}
\subjclass[2010]{05E18, 05E30}
\begin{document}

\begin{abstract}
A finite group $G$ is a called a DCI-group if any two isomorphic Cayley digraphs 
of $G$ are also isomorphic via an automorphism of $G$. 
If $G$ is a non-abelian generalised dihedral DCI-group, then 
Dobson, Muzychuk, and Spiga proved that $G$ 
must be a dihedral group of square-free order (Ars Math. Contemp., 2022). In this paper, we prove that the converse statement also holds, i.\,e., all dihedral groups of square-free order are DCI-groups.
\end{abstract}

\maketitle
\section{Introduction}\label{sec:intro}

Let $G$ be a finite group and let $S$ be a subset of $G$ such that $e_G \notin S$, where 
$e_G$ denotes the identity element of $G$. The \emph{Cayley digraph} $\cay(G,S)$ is defined to have vertex set $G$ and arcs of the form $(g,sg)$, where $g \in G$ and $s \in S$. 
Any automorphism $\alpha$ of $G$ induces an isomorphism between $\cay(G,S)$ and $\cay(G,S^{\alpha})$, these digraphs are called \emph{Cayley isomorphic}. 
The subset $S$ is called a \emph{CI-subset} if for every $T \subseteq  G,\ e_G \notin T$, the relation $\cay(G,S) \cong \cay(G,T)$ implies that the two digraphs are also Cayley isomorphic. The group $G$ is called a \emph{DCI-group} if all subsets of $G \setminus \{e_G\}$ are CI-subsets, and it is called a \emph{CI-group} if the same conclusion holds for only those subsets which are closed with respect to taking inverses. 

\'Ad\'am~\cite{A} conjectured that all cyclic groups are DCI-groups. 
This was disproved soon after its publication, however, the classifications of cyclic 
CI- and DCI-groups were given only 30 years later by Muzychuk~\cite{M97}.

\begin{thm}[\cite{M97}]
A cyclic group of order $n$ is a DCI-group if and only if $n$ is square-free or twice square-free. Furthermore, the cyclic CI- but not a DCI-groups are those of order $8,\, 9$ and $18$, resp.
\end{thm}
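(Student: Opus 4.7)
The plan is to recast the DCI property via the Schur-ring approach of Babai and P\'alfy: by a standard argument (often called Babai's criterion), the cyclic group $\Z_n$ is a DCI-group if and only if, for every Schur ring (S-ring) $\A$ over $\Z_n$, any two regular cyclic subgroups of order $n$ in $\aut(\A)$ are conjugate inside $\aut(\A)$. This converts the apparently intractable quantification over all subsets $S\subseteq \Z_n$ into a structural question about the (finite) lattice of S-rings over $\Z_n$, which can be attacked by induction on $n$.

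For the \emph{sufficiency} direction, assume $n$ is square-free or twice square-free and let $\A$ be an S-ring over $\Z_n$. I would use the classical structure theory of S-rings over cyclic groups to split into two cases. In the \emph{imprimitive} case, $\A$ admits a non-trivial $\A$-subgroup $H\le \Z_n$, and $\A$ decomposes as a (generalised) wreath product of its restriction $\A_H$ and its quotient $\A_{\Z_n/H}$; here both $H$ and $\Z_n/H$ have orders that are again square-free or twice square-free, so the inductive hypothesis yields conjugacy of the regular cyclic subgroups of order $|H|$ in $\aut(\A_H)$ and of order $|\Z_n/H|$ in $\aut(\A_{\Z_n/H})$, and one must then lift these two conjugations simultaneously through the wreath extension. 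In the \emph{primitive} case, I would invoke the Leung--Man-type classification of primitive S-rings over cyclic groups: over a square-free order they must either be trivial or of cyclotomic type, and in both situations the regular cyclic subgroup of $\aut(\A)$ is unique up to conjugation, settled by a direct computation.

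For the \emph{necessity} direction, I would exhibit explicit non-CI subsets whenever $n$ is divisible by $p^3$ for some prime $p$, or by $4$ with an additional odd square factor, or by $p^2$ with $p$ odd while $n\notin\{9,18\}$. The prototypes are the Elspas--Turner subset of $\Z_8$ and its odd analogue in $\Z_{p^2}$: both yield two Cayley (di)graphs isomorphic as (di)graphs but not via any multiplier of $\Z_n$. For arbitrary $n$ outside the square-free / twice-square-free regime, these prototypes can be embedded into $\Z_n$ via the Chinese Remainder Theorem combined with a wreath-type construction, giving a non-CI subset of $\Z_n$. The refinement for $n=8,9,18$ reduces to verifying that the constructed counterexamples can be chosen symmetric ($S=-S$), while for all other offending $n$ one argues that a directed CI obstruction can always be promoted to an undirected one, so these three orders are precisely the cyclic CI- but not DCI-groups.

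The main obstacle in the plan is the lifting step in the imprimitive case of the sufficiency: glueing two conjugacies of regular cyclic subgroups across a (generalised) wreath product into a single conjugacy in $\aut(\A)$. This requires Muzychuk's multiplier machinery and a careful analysis of how automorphisms of the quotient $\Z_n/H$ lift through the extension; square-freeness (or near square-freeness) of $|H|$ and $|\Z_n/H|$ is precisely what kills the obstructions that appear for $\Z_{p^2}$ and produce the exceptional orders $8,9,18$.
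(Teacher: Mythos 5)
This theorem is not proved in the paper at all: it is quoted from Muzychuk's work \cite{M97} and used as imported background, so there is no in-paper argument to measure your proposal against. Your outline does follow the same general strategy as Muzychuk's original proof (reduction to conjugacy of regular cyclic subgroups in the automorphism groups of S-rings, induction through the primitive/imprimitive dichotomy, explicit counterexamples for necessity), but as written it has two genuine gaps rather than being a proof.

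First, the step you yourself flag as ``the main obstacle'' --- lifting the two conjugacies through a (generalised) wreath decomposition --- is not a deferrable technicality; it is where essentially all of the content of the theorem lives. An imprimitive S-ring over $\Z_n$ need not be a plain wreath product of $\A_H$ and $\A_{\Z_n/H}$: one needs the full decomposition theory (tensor/dot products, \emph{generalised} wreath products over a section $U/L$, and the normal/cyclotomic case), and conjugacy of regular cyclic subgroups does not pass through a generalised wreath decomposition by any formal argument --- this is precisely where $\Z_{p^2}$ and $\Z_8$ go wrong and the exceptional orders are born. Muzychuk's treatment occupies two long papers plus a corrigendum because of this step, so a plan that leaves it open has not established sufficiency. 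Second, your handling of the orders $8$, $9$, $18$ is logically inverted: these groups are CI but \emph{not} DCI, so you must exhibit a non-CI subset (necessarily non-symmetric) for each of them as well --- your necessity list excludes $9$ and $18$ --- and, separately, prove the positive statement that \emph{every} inverse-closed subset of these three groups is a CI-subset; verifying that ``the constructed counterexamples can be chosen symmetric'' would prove the opposite of what is claimed. At least for these three orders the positive half is a finite check, but the sketch as stated does not deliver the ``CI but not DCI'' conclusion.
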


The request for determining all finite CI-groups appeared in~\cite{BF} (see also the survey paper~\cite{L02}). Building upon the work of several mathematicians, 
the candidates of DCI-groups has been reduced to a restricted list~\cite{DMS15,LLP}. However, the verification that a particular group on this list is indeed a DCI-group is a difficult task. Further significant restriction on generalised dihedral DCI-groups
was obtained by Dobson et al.~\cite{DMS22}. For an abelian group $A$, the \emph{generalised dihedral group} $\dih(A)$ is the semidirect product 
of $A$ with the cyclic group $\langle x \rangle$ of order $2$, where $x$ acts on $A$ by conjugation as $x^{-1}ax=a^{-1}$ ($a \in A$). If $A$ is a non-trivial cyclic group of order 
$n$, then $\dih(A)$ is known as the dihedral group of order $2n$. In this paper, we denote by $D_{2n}$ the \emph{dihedral group} of order $2n$. 

\begin{thm}[\cite{DMS22}]
Let $\dih(A)$ be a generalised dihedral group over the abelian group $A$. If
$\dih(A)$ is a CI-group, then, for every odd prime $p$ the Sylow p-subgroup of $A$ has order $p$ or $9$. If $\dih(A)$ is a DCI-group, then, in addition, the Sylow 3-subgroup has order $3$.
\end{thm}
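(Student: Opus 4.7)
The task is a necessity result, so I need to exhibit, for every abelian group $A$ whose Sylow structure violates the stated bounds, a pair of Cayley (di)graphs on $\dih(A)$ that are abstractly isomorphic but not Cayley isomorphic. My plan is a reduction-plus-explicit-construction strategy.

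First, I would reduce to a primary abelian group $A = A_p$. For odd $p$, the Sylow $p$-subgroup $A_p$ of $A$ is also the Sylow $p$-subgroup of $\dih(A)$, and it is a characteristic Hall subgroup there (since $A$ itself is characteristic in $\dih(A)$ for $|A|>2$, and $A_p$ is characteristic in the abelian group $A$). Standard inheritance results for the (D)CI-property to Hall subgroups then imply that if $\dih(A)$ is a (D)CI-group, so is $A_p$, and more usefully so is $\dih(A_p)\le \dih(A)$ obtained by adjoining the inverting involution. This reduces matters to showing: for each odd prime $p$, the only abelian $p$-groups $B$ for which $\dih(B)$ is a CI-group (respectively DCI-group) are those allowed by the theorem, namely $B$ of order $p$ or $B=\mathbb{Z}_9$ (respectively only $B$ of order $p$, with $\mathbb{Z}_9$ excluded in the DCI case).

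Second, for each offending primary $B$, I would construct explicit non-CI subsets of $\dih(B)$. The workhorse is Schur-ring (S-ring) theory: $G$ is a (D)CI-group if and only if for every (symmetric, respectively arbitrary) S-ring $\A$ on $G$, every two regular $G$-subgroups of $\aut(\A)$ are conjugate in $\aut(\A)$. For each bad $B$ I would exhibit an S-ring $\A$ on $\dih(B)$ possessing two non-conjugate regular $\dih(B)$-subgroups. When $B=\mathbb{Z}_{p^2}$ with $p\ge 5$, the construction inflates Muzychuk's non-CI S-ring on $\mathbb{Z}_{p^2}$ via the quotient $\dih(B)\to\dih(B/pB)$, extending the two regular embeddings through the dihedral structure. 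When $B=\mathbb{Z}_p\times\mathbb{Z}_p$ with $p\ge 5$, a wreath-type S-ring refining the coset partition with respect to a well-chosen subgroup of order $p$ produces two regular dihedral subgroups that are conjugate inside the relevant wreath product of symmetric groups, but not via any element of $\aut(\dih(B))$. Higher-rank cases $|B|\ge p^3$ are handled by an analogous generalised wreath construction, and the $\mathbb{Z}_9$ case (excluded only from DCI) is handled by a small direct computation exploiting the asymmetry between directed and undirected connection sets.

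The main obstacle will be the case where $B$ is itself a CI-group, most notably $B=\mathbb{Z}_p\times\mathbb{Z}_p$: the failure of CI for $\dih(B)$ is a new phenomenon caused by the dihedral twist, not inherited from $B$. Verifying non-conjugacy of the two regular subgroups within the full automorphism group of the candidate Cayley digraph requires a careful analysis using that $\aut(\dih(B))\cong B\rtimes\aut(B)$ (for $|B|>2$), and showing that this outer group cannot permute the primitivity blocks introduced by the S-ring in the required manner. Pinning down this last non-conjugacy statement, and controlling the interaction between the inverting involution $x$ and the S-ring refinement on $B$, is the technical heart of the argument.
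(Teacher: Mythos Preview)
The theorem you are attempting to prove is not proved in this paper: it is quoted verbatim from \cite{DMS22} as background, and the present paper supplies no argument for it whatsoever. Consequently there is no ``paper's own proof'' against which to compare your proposal; the authors simply invoke the result to motivate Theorem~\ref{thm:main}.

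As for the soundness of your sketch on its own merits: the overall architecture---reduce to a primary $A_p$ via the fact that $\dih(A_p)$ sits inside $\dih(A)$ as a subgroup and that the (D)CI-property is inherited by subgroups, then exhibit explicit non-CI connection sets case-by-case---is the standard and correct line of attack for necessity results of this type, and is indeed the strategy of \cite{DMS22}. Two remarks. First, your reduction already dispatches many cases for free: whenever $B$ itself fails to be (D)CI (e.g.\ $B=\Z_{p^2}$ for $p\ge 5$, or $B=\Z_{p^k}$ with $k\ge 3$ by Muzychuk's classification), subgroup inheritance finishes the job without any construction on $\dih(B)$. Second, the case $p=3$ is more delicate than your outline suggests: since the theorem permits $|A_3|=9$ in the CI case, you cannot use $\dih(\Z_3\times\Z_3)$ as a forbidden subgroup, and must instead rule out each abelian $3$-group of order $27$ directly (namely $\Z_{27}$, $\Z_9\times\Z_3$, and $\Z_3^3$), the latter two of which are themselves CI-groups. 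Your proposal does not isolate this wrinkle, and it is precisely here that the constructions in \cite{DMS22} become essential rather than routine.
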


If $|A|$ is even and $\dih(A)$ is non-abelian, 
then there are involutions both inside and outside of the center 
of $\dih(A)$, implying that $\dih(A)$ is not a DCI-group.  
Therefore, the determination of the non-abelian generalised dihedral DCI-groups is reduced 
to decide whether the dihedral group $D_{2n}$ is a DCI-group for an odd square-free number $n$. This has been confirmed for special values of $n$: $n=p$~\cite{B}, $n=3p$~\cite{DMS15},  
$n \in \{pq,pqr\}$~\cite{M23}, where $p, q, r$ are pairwise distinct odd primes. 
The group $D_{2n}$ is conjectured in~\cite{DMS15,XFK} to be a DCI-group 
for every odd square-free number $n > 1$. In this paper, we show that this is indeed the case;  the following theorem is the main result of this paper.

\begin{thm}\label{thm:main}
If $n$ is an odd square-free integer larger than $1$, then $D_{2n}$ is a DCI-group. 
\end{thm}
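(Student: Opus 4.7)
The approach is the Schur ring reduction of the CI-problem. By the standard criterion going back to Babai, a subset $S \subseteq G \setminus \{e_G\}$ is a CI-subset if and only if every two regular subgroups of $\aut(\cay(G,S))$ isomorphic to $G$ are conjugate in $\aut(\cay(G,S))$. Packaging this uniformly via Schur rings, $G$ is a DCI-group precisely when, for every Schur ring $\A$ over $G$, all regular subgroups of $\aut(\A)$ isomorphic to $G$ form a single $\aut(\A)$-conjugacy class. So for $G=D_{2n}$ the task becomes to classify Schur rings over $G$ finely enough to verify this conjugacy statement case by case.

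\textbf{Structural setup.} Let $C=\langle r\rangle$ be the cyclic subgroup of order $n$ and fix an involution $x\in G\setminus C$. Because $n$ is odd and square-free, $C$ is the unique index-$2$ subgroup of $G$, and more strongly every subgroup of $C$ is characteristic in $G$. Consequently, every Schur ring $\A$ over $G$ admits a restriction $\A|_C$ which is itself a Schur ring over the cyclic group $C$ of square-free order. The first step is to prove that, via the inversion action of $x$ on $C$, the basic $\A$-sets meeting the reflection coset $Cx$ are tightly controlled by $\A|_C$. Muzychuk's classification of Schur rings over cyclic groups of square-free order then decomposes $\A|_C$ along the prime divisors of $n$ via wreath and tensor products; the key structural lemma to prove is that this decomposition lifts to a compatible decomposition of $\A$ itself over characteristic $\A$-subgroups of $G$.

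\textbf{Induction and the main obstacle.} With the structural decomposition in place, I would induct on the number of prime divisors of $n$. Whenever $\A$ decomposes non-trivially as a wreath or tensor product over a proper characteristic $\A$-subgroup, the conjugacy of regular dihedral subgroups of $\aut(\A)$ reduces to conjugacy questions inside strictly smaller Schur-ring automorphism groups, closed off by the base cases $n\in\{p,\,3p,\,pq,\,pqr\}$ from \cite{B,DMS15,M23} and Muzychuk's cyclic DCI theorem applied to the rotational part. The main obstacle is the indecomposable case: a Schur ring $\A$ over $G$ admitting no suitable proper $\A$-subgroup to wreath or tensor over. In that situation, given a second regular $H\le \aut(\A)$ isomorphic to $D_{2n}$, one must first show that the unique cyclic index-$2$ subgroup of $H$ is $\aut(\A)$-conjugate to $C_R$, and then match the reflections of $H$ to those of $G_R$ inside the coset $Cx$. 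Controlling precisely how the basic $\A$-sets in $Cx$ can be permuted by $\aut(\A)$ is the most delicate step, and it is here that arguments essentially new compared to those used for small numbers of prime factors of $n$ will be required.
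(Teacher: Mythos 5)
Your overall framework (reduce the DCI-property to a conjugacy statement about regular dihedral subgroups of automorphism groups of Schur rings) is the right starting point and matches the paper's Section~\ref{sec:S-rings}, but there is a genuine gap: you never perform the reduction to $\prec_{\hat{H}}$-minimal overgroups of $\hat{H}$, nor prove the solvability of those groups, and without that the structural programme you describe breaks down. The paper first shows (Theorem~\ref{thm:solvable}, following Muzychuk's argument for cyclic groups and using the CFSG-based classifications of primitive groups with a regular cyclic or dihedral subgroup) that every $\prec$-minimal $d$-group of square-free degree is solvable; only then does it follow that $H_n$ is an $\A$-subgroup (Proposition~\ref{prop:Hn-is-block} uses Hall's theorem) and that $\A$ possesses an atom $Lx$ with $L\le H_n$ and $x\notin H_n$ (Lemma~\ref{lem:atom}). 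If you insist on treating \emph{every} Schur ring over $D_{2n}$, neither fact is available: the Remark after Lemma~\ref{lem:atom} exhibits, from the incidence graph of $\mathrm{PG}(2,q)$, a transitivity module over a dihedral group of order $2(q^2+q+1)$ (odd and square-free, e.g., for $q=4$) in which the reflection coset splits into basic sets of sizes $q+1$ and $q^2$ and which has no atoms. Such non-solvable examples defeat your proposed key structural lemma that a wreath/tensor decomposition of $\A|_C$ lifts to $\A$, and they are exactly what the minimality-plus-solvability reduction is designed to exclude. (Also, the existence of the restriction $\A|_C$ presupposes that $C$ is an $\A$-subgroup, which is not automatic for an arbitrary Schur ring over $D_{2n}$; and the relevant criterion concerns transitivity modules of $2$-closed overgroups rather than all Schur rings.)

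The second, decisive gap is that you explicitly leave the main difficulty open: in the indecomposable case you say that matching the reflections of a second regular dihedral subgroup is ``the most delicate step'' and that ``essentially new arguments will be required.'' That step is precisely where the paper's real work lies. After the solvability reduction the paper does not classify Schur rings at all; instead it shows that any normalised isomorphism fixing the basic sets inside $H_n$ translates every atom by one common element $c\in H_n$ (Lemma~\ref{lem:f-c}), and that an isomorphism fixing all basic sets in $H_n$ and all atoms fixes every basic set (Lemma~\ref{lem:f-fix}), both by induction on $|\pi(n)|$ using the subgroups $H(p)$ built from $O_p(G)$. Combined with the known CI-property of the circulant part $\A_{H_n}$, this yields the required automorphism $\beta^{-1}\sigma_c^{-1}$ and hence Theorem~\ref{thm:main-v2}. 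Your proposal would need to supply an argument of comparable substance for the coset $Cx$; as written it identifies the problem but does not solve it.
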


We prove the above theorem using the \emph{Schur ring method}. 
The application of Schur rings (S-rings for short) to the study of DCI-groups dates back to~\cite{KP};   
in this paper, we apply results about S-rings obtained in~\cite{HM,SM}. For a survey 
covering various applications of S-rings in algebraic combinatorics, 
we refer the reader to~\cite{MP}.

The outline of the paper is as follows. Section~\ref{sec:S-rings} reviews the necessary   
background on S-rings. In this section, we reformulate Theorem~\ref{thm:main} as  
Theorem~\ref{thm:main-v2} in the context of S-ring theory. The proof of Theorem~\ref{thm:main-v2} is presented in 
Section~\ref{sec:proof}. This proof relies on an auxiliary result concerning a certain class of permutations groups, which is derived in Section~\ref{sec:d-groups}.
\medskip

We conclude the introduction by fixing notation used throughout the paper.
\subsection*{Notation} 
Let $G$ be a group acting on a set $X$ and $\delta$ be a partition of $X$.
\begin{itemize}
    \item The identity element of $G$ will be denoted by $e_G$ and let $G^\#=G \setminus \{e_G\}$. 
    \item For $g \in G$, let $\hat{g}$ denote the \emph{right translation} acting on $G$ as $x^{\hat{g}}=xg$ ($x \in G$). For $H \subseteq G$, let $\hat{H}=\{\hat{h} \mid h \in H\}$ and 
    \[
    \Sup(\hat{G})=\{ K \le \sym(G) \mid \hat{G} \le K \}.
    \]
    \item For $S \subseteq G$, let $S^{-1}=\{x^{-1} \mid x \in S\}$ and $\langle S \rangle$ be the subgroup generated by $S$, and $\underline{S}$ be the group ring element $\sum_{x\in S}x$ in the integer group ring $\Z G$. 
    \item For $x \in X$, denote by $G_x$ the stabilizer of $x$ in $G$, by $x^G$ the $G$-orbit of $x$, and by $\orb(G,X)$ the set of all $G$-orbits.
     \item If $Y \subseteq X$ is $G$-invariant,i.e. $Y$ is the union of $G$-orbits, then for $g \in G$, let $g^Y$ denote the permutation of $Y$ obtained by restricting $g$ to $Y$, and let $g^{Y^*}$ denote the permutation of $X$ defined by 
     \[ 
     x^{g^{Y^*}} = \begin{cases} 
     x^g & \mbox{if}~x \in Y, \\ 
     x & \mbox{if}~x \notin Y, 
     \end{cases} \quad x \in X.
     \] 
     For $H \subseteq G$, let $H^Y=\{h^Y \mid h \in H\}$ and $H^{Y^*}=\{h^{Y^*} \mid h \in H\}$. 
    \item For $g \in G$, let $\delta^g$ denote the partition of $X$ defined as $\delta^g=\{ \Delta^g \mid \Delta \in \delta\}$, and let 
    \[
    G_{\{\delta\}}=\{ g \in G \mid \delta^g=\delta \}.
    \]
    \item If $G$ is transitive on $X$ and $G=G_{\{\delta\}}$, then $\delta$ is called a \emph{system of blocks} for $G$ and its classes are referred to as \emph{blocks}. Then $\delta$ called is \emph{trivial} if its blocks are either singletons or the whole set; and \emph{minimal} if there is no non-trivial system of blocks $\delta^\prime$ for $G$ such that $\delta^\prime \ne \delta$ and $\delta^\prime$ refines $\delta$.  
    \item If $\delta$ is a system of blocks for $G$ and $g \in G$, then let $g^\delta$ denote the image of $g$ under the action of $G$ on $\delta$. Furthermore, let $G^\delta$ and $G_\delta$ denote the image and the kernel of the latter action, or more formally, 
    \[
    G^\delta=\{ g^\delta \mid g \in G\}~\text{and}~G_\delta=\{g \in G \mid 
    \forall \Delta \in \delta,~\Delta^g=\Delta\}.
    \]
\end{itemize}

\section{S-rings}\label{sec:S-rings}

Let $H$ be a finite group. A subring  $\A$ of the group ring $\Z H$ is called 
a \emph{Schur ring} (\emph{S-ring} for short) over $H$ if there exists a partition $\cS(\A)$ of $H$ such that
\begin{enumerate}[\rm (a)]
\setlength{\itemsep}{0.4\baselineskip}
    \item $\{e_H\} \in \cS(\A)$,
    \item  if $X \in \cS(\A)$ then $X^{-1} \in \cS(\A)$,
    \item $\A=\Span_{\Z}\{\underline{X} \mid X \in \cS(\A)\}$.
\end{enumerate}
This definition is due to Wielandt~\cite[Chapter~IV]{Wbook} and it is motivated by a result of Schur~\cite{Sch}, which shows that for any group 
$G \in \Sup(\hat{H})$, the $\Z$-submodule 
\[
\Span_{\Z}\{\underline{X}~\mid~X \in \orb(G_{e_H},H)\}
\]
is a subring of $\Z H$ (see also~\cite[Theorem~24.1]{Wbook}). The latter subring is called the \emph{transitivity module} over $H$ induced by $G$, denoted by $V(H,G_{e_H})$. Transitive modules are S-rings, however, the converse is not true --  
there exist S-rings which are not transitive modules~\cite{Wbook}. In this section, we 
review all concepts and results from S-ring theory needed in this paper. 
\medskip

Let $\A$ be an S-ring over $H$. 
The classes of $\cS(\A)$ are called the \emph{basic sets} of
$\A$ and the number $\rk(\A):=|\cS(\A)|$ is called the \emph{rank} of $\A$. 
A subset $X \subseteq H$ is called an 
\emph{$\A$-set} if $\underline{X} \in \A$ and a subgroup
$K \leq H$ is called an \emph{$\A$-subgroup} if $\underline{K} \in \A$. 
With each $\A$-set $X$ one can naturally associate two
$\A$-subgroups, namely, $\langle X \rangle$ and
\[
\stab(X):=\{h\in H \mid hX=Xh=X\}.
\]

Let $L \lhd U \leq H$. The section $S:=U/L$ of $H$ is called an \emph{$\A$-section} if $U$ and $L$ are $\A$-subgroups. In this case, the $\Z$-submodule of $\Z S$, defined as 
\[
\A_S:=\Span_{\Z}\{ \underline{X/L}~\mid~X \in \cS(\A), X \subseteq U\}
\]
is an S-ring over $S$. Here $X/L$ is the subset of the  
group $S=U/L$ consisting of the cosets $Lx$, where $x$ runs over $X$.  
Note that, if $\A=V(H,G_{e_H})$ and $S$ is an $\A$-section,
then $\A_S=V(S,(G^S)_{e_S})$~\cite[Proposition~2.8]{HM}. 

Suppose, in addition, that $\B$ is an S-ring over another group 
$K$ (possibly $K=H$). By a \emph{combinatorial isomorphism} (\emph{isomorphism} for short) 
from the S-ring $\A$ to the S-ring $\B$ we mean a bijective mapping $\varphi : H \to K$ such that  
\begin{enumerate}[\rm (a)]
\item $\rk(\A)=\rk(\B)=r$, 
\item there are orderings $X_1,\ldots,X_r$ and $Y_1,\ldots,Y_r$ 
of the basic sets in $\cS(\A)$ and $\cS(\B)$, resp., such that $\varphi$ is an isomorphism from $\cay(H,X_i)$ to $\cay(K,Y_i)$ for every $1 \leq i \leq r$.
\end{enumerate}
Now, $\varphi$ is called \emph{normalised} if it maps
the identity element $e_H$ to the identity element $e_K$.
If there is an isomorphism from $\A$ to $\B$, then $\A$ and $\B$ are said to be 
\emph{isomorphic}, denoted by $\A \cong \B$.
Let $\iso(\A,\B)$ denote the set of all isomorphisms from $\A$ to $\B$.
If $\varphi \in \iso(\A,\B)$ is normalised, then
$X^{\,\varphi} \in \cS(\B)$ for every basic set $X \in \cS(\A)$; moreover, 
it holds:   
\begin{equation}\label{eq:XiXj}
\forall  X, Y \in \cS(\A),~(XY)^\varphi=X^\varphi Y^\varphi.
\end{equation}

In the next proposition we collect further properties.

\begin{prop}{\rm (\cite[Proposition~2.7]{HM})}\label{prop:iso}
Let $\varphi : \A \to \B$ be a normalised isomorphism from an
S-ring $\A$ over a group $H$ to an S-ring $\B$ over a group $K$,
and let $E \leq H$ be an $\A$-subgroup.
\begin{enumerate}[{\rm (i)}]
\item The image $E^\varphi$ is a $\B$-subgroup of $K$. Moreover, the restriction
$\varphi_E : E \to E^\varphi$ is an isomorphism between $\A_E$ and
$\B_{E^\varphi}$.
\item For each $x \in H$, $(Ex)^\varphi=E^\varphi x^\varphi$.
\item If $E \lhd H$ and $E^\varphi \lhd K$,
then the mapping $\varphi^{H/E} : H/E \to K/E^\varphi$,
defined by 
\[
(Ex)^{\varphi^{H/E}}=E^\varphi x^\varphi,\quad x \in H
\] 
is a normalised isomorphism from $\A_{H/E}$ to $\B_{K/E^\varphi}$.
\end{enumerate}
\end{prop}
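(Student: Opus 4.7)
The plan is to exploit equation~(\ref{eq:XiXj}) in its pointwise form,
\[
(Xg)^\varphi = X^\varphi g^\varphi \qquad\text{for every }X\in\cS(\A)\text{ and every }g\in H,
\]
which is just the condition that $\varphi$ be a graph isomorphism on each $\cay(H,X)$, together with the observation that any $\A$-subgroup is a disjoint union of basic sets of $\A$.

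For~(i), I would write $E=X_1\cup\cdots\cup X_m$ with $X_i\in\cS(\A)$, so that $E^\varphi=X_1^\varphi\cup\cdots\cup X_m^\varphi$ is a union of basic sets of $\B$ and hence $\underline{E^\varphi}\in\B$. Applying $\varphi$ to the identity $\underline{E}^{\,2}=|E|\,\underline{E}$ yields $\underline{E^\varphi}^{\,2}=|E^\varphi|\,\underline{E^\varphi}$, which forces $E^\varphi\cdot E^\varphi\subseteq E^\varphi$ and, since $|E^\varphi|=|E|$, makes $E^\varphi$ a subgroup of $K$; the restriction $\varphi_E$ then bijects the basic sets $X_i$ of $\A_E$ with those of $\B_{E^\varphi}$ and inherits the graph-isomorphism property from $\varphi$. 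For~(ii), I would consider the Cayley digraph $\cay(H,E^\#)$, whose strongly connected components are precisely the right cosets of $E$ (since $E=E^{-1}$). Because $E^\#$ is a union of basic sets of $\A$, $\varphi$ restricts to an isomorphism onto $\cay(K,(E^\varphi)^\#)$, whose components are the right cosets of $E^\varphi$ by~(i); hence $\varphi$ maps right $E$-cosets to right $E^\varphi$-cosets, and the equality $(Ex)^\varphi=E^\varphi x^\varphi$ is forced because $x^\varphi$ lies in both $(Ex)^\varphi$ and $E^\varphi x^\varphi$.

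For~(iii), well-definedness and bijectivity of $\varphi^{H/E}$ follow at once from~(ii), and normalisation is clear. The basic sets of $\A_{H/E}$ are the distinct quotients $X/E$ for $X\in\cS(\A)$, and $\varphi^{H/E}$ sends each $X/E$ to the basic set $X^\varphi/E^\varphi$ of $\B_{K/E^\varphi}$. The graph-isomorphism condition on $\cay(H/E,X/E)$ then reduces to the corresponding one on $\cay(H,X)$: the out-neighbours $\{E(xy):x\in X\}$ of $Ey$ map under $\varphi^{H/E}$ to $\{E^\varphi(xy)^\varphi:x\in X\}$, and applying the displayed identity with $g=y$ followed by reduction modulo $E^\varphi$ rewrites this as $\{E^\varphi z^\varphi y^\varphi:z\in X\}$, which is exactly the out-neighbour set of $E^\varphi y^\varphi$ in $\cay(K/E^\varphi,X^\varphi/E^\varphi)$. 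I expect the main obstacle to be the bookkeeping in~(iii), ensuring that $X^\varphi/E^\varphi$ is correctly identified as a basic set of $\B_{K/E^\varphi}$ and that the out-neighbour sets in the quotient digraphs match consistently; the normality hypotheses $E\lhd H$ and $E^\varphi\lhd K$ enter precisely to make the coset products $Ex\cdot Ey=E(xy)$ unambiguous.
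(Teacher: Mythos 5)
Your argument is correct. Note that the paper offers no proof of this proposition at all --- it is imported verbatim as \cite[Proposition~2.7]{HM} --- so there is no in-paper argument to compare with; what you have written is a self-contained verification, and all three parts go through. Two small remarks. In (i), the step ``apply $\varphi$ to $\underline{E}^{\,2}=|E|\,\underline{E}$'' tacitly uses that $\varphi$ induces an \emph{algebra} isomorphism $\A\to\B$ (preservation of structure constants, not just of product sets); this is true, but it is more than Eq.~\eqref{eq:XiXj} literally records, and you do not need it: writing $E=X_1\cup\cdots\cup X_m$ and applying \eqref{eq:XiXj} to each pair gives the set-level identity $E^\varphi E^\varphi=(EE)^\varphi=E^\varphi$, which together with $e_K\in E^\varphi$ and finiteness already makes $E^\varphi$ a subgroup. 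In (ii), the connected-components argument is valid, but the pointwise identity $(Xg)^\varphi=X^\varphi g^\varphi$ that you display at the outset yields the statement in one line, since $(Ex)^\varphi=\bigcup_i (X_i x)^\varphi=\bigcup_i X_i^\varphi x^\varphi=E^\varphi x^\varphi$. Your treatment of (iii), including the role of the normality hypotheses in making the quotient Cayley digraphs well defined and the identification of $X^\varphi/E^\varphi$ as a basic set of $\B_{K/E^\varphi}$, is sound.
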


In this paper we are interested in isomorphisms between S-rings over the same group $H$ and set 
the following notations:
\begin{align*}
\iso(\A) &=\big\{\varphi \in \iso(\A,\A')~\mid~\A'~\text{is an S-ring over}~H\big\}, \\ 
\iso_1(\A) &=\{\varphi \in \iso(\A) \mid e_H^\varphi=e_H\}.
\end{align*}
Note that $\iso(\A,\A)$ is a subgroup of $\sym(H)$. This contains the
normal subgroup $\aut(\A)$ defined as 
\[
\aut(\A)=\bigcap_{\substack{X\in \cS(\A) \\ X \ne \{e_H\}}}\aut(\cay(H,X)),
\]
called the \emph{automorphism group} of $\A$~\cite{KP}.
It follows directly from the above definitions that for every $g \in \aut(\A)$ and 
$\alpha \in \aut(H)$, we have that $g\alpha \in \iso(\A)$. The S-ring $\A$ is called a \emph{CI-S-ring} if the following equality holds~\cite[Definition~3]{HM}:
\begin{equation}\label{eq:CI-S}
\iso_1(\A)=\aut(\A)_{e_H}\aut(H).
\end{equation}

Let $G \in \Sup(\hat{H})$ be a $2$-closed group 
(for a definition, see~\cite{DMbook}). 
Hirasaka and Muzychuk~\cite[Theorem~2.6]{HM} 
showed that the condition that $V(H,G_{e_H})$ is a CI-S-ring is equivalent to 
the condition that any two regular subgroups of $G$, which are also isomorphic to $H$, are conjugate in 
$G$. This result together with Babai's lemma~\cite[Lemma~3.1]{B} and  
the fact that the automorphism groups of Cayley digraphs are $2$-closed (see, e.\,g.,~\cite{DMMbook}) 
enable us to derive that $H$ is a DCI-group by showing that 
$V(H,G_{e_H})$ is a CI-S-ring for every group $G \in \Sup(\hat{H})$.  
Somlai and Muzychuk~\cite{SM} pointed out that the analysis of 
the S-rings $V(H,G_{e_H})$ can be reduced to those groups $G$ that are minimal with respect to the relation $\prec_{\hat{H}}$ on $\Sup(\hat{H})$ defined as follows (see~\cite[Definition~2]{HM}): 
For two groups $A, B \in \Sup(\hat{H})$, $A$ is said to be an $\mathit{\hat{H}}$\emph{-complete subgroup} of $B$, denoted by $A \prec_{\hat{H}} B$, if 
\begin{enumerate}[\rm (a)]
    \item $A \le B$,
    \item for every $\varphi \in \sym(H)$, the inclusion $(\hat{H})^\varphi \leq B$ implies that 
$(\hat{H})^{\varphi b} \leq A$ for some $b \in B$.
\end{enumerate}
The relation $\prec_{\hat{H}}$ is a partial order on 
$\Sup(\hat{H})$ and the set of minimal elements of the poset $(\Sup(\hat{H}),
\prec_{\hat{H}})$
will be denoted by $\Sup^{\min}(\hat{H})$. 
The following statement is a direct consequence of~\cite[Proposition~2.4]{SM} (see also~\cite[Corollary~3.4]{KMPRS}). 

\begin{prop}\label{prop:condi}
Let $H$ be a group and assume that $V(H,G_{e_A})$ is a CI-S-ring
for every $G \in \Sup^{\min}(\hat{H})$. Then $H$ is a DCI-group.
\end{prop}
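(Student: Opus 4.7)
The plan is to argue by contraposition, using the Hirasaka--Muzychuk equivalence together with the defining property of the partial order $\prec_{\hat H}$. Suppose $H$ is not a DCI-group; then by Babai's lemma applied to the $2$-closed group $G:=\aut(\cay(H,S))\in\Sup(\hat H)$ arising from a witnessing Cayley digraph, $G$ contains a regular subgroup $R$ isomorphic to $H$ that is not conjugate to $\hat H$ in $G$. Equivalently, by \cite[Theorem~2.6]{HM}, $V(H,G_{e_H})$ fails to be a CI-S-ring for this $G$.

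Next I would descend in the finite poset $(\Sup(\hat H),\prec_{\hat H})$ to some $G_0\in \Sup^{\min}(\hat H)$ with $G_0\prec_{\hat H} G$. Since every regular subgroup of $\sym(H)$ isomorphic to $H$ is conjugate to $\hat H$ in $\sym(H)$, I may write $R=(\hat H)^{\psi}$ for some $\psi\in\sym(H)$. The defining property of $\prec_{\hat H}$ applied to $\psi$, together with $(\hat H)^{\psi}=R\leq G$, then yields some $b\in G$ with $R^b=(\hat H)^{\psi b}\leq G_0$, so that $R^b$ is a regular subgroup of $G_0$ isomorphic to $H$. The CI-S-ring hypothesis on $G_0$ should now provide $g_0\in G_0$ with $R^b=(\hat H)^{g_0}$; pulling back gives $R=(\hat H)^{g_0 b^{-1}}$ with $g_0 b^{-1}\in G$, contradicting the choice of $R$.

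The main obstacle is the extraction of the conjugating element $g_0$ from the CI-S-ring hypothesis on $G_0$: by \cite[Theorem~2.6]{HM} this is automatic as soon as $G_0$ is $2$-closed, so I expect the crux to be the verification that elements of $\Sup^{\min}(\hat H)$ are $2$-closed. This should follow by passing from $G_0$ to its $2$-closure $G_0^{(2)}$, noting that $G_0^{(2)}\in\Sup(\hat H)$ preserves the orbits of $(G_0)_{e_H}$ on $H$, and then using the minimality of $G_0$ to force $G_0^{(2)}=G_0$. Alternatively, the conjugating element $g_0$ can be produced directly from the combinatorial definition \eqref{eq:CI-S} of the CI-S-ring property, applied to the normalised isomorphism of $V(H,(G_0)_{e_H})$ induced by $b$; this avoids invoking $2$-closedness of $G_0$ and is essentially the viewpoint of \cite[Proposition~2.4]{SM}.
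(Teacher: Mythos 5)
The paper does not actually prove this proposition; it simply cites \cite[Proposition~2.4]{SM} and \cite[Corollary~3.4]{KMPRS}, and your reconstruction follows the same architecture as those sources. The skeleton of your argument is sound: the reduction via Babai's lemma to conjugacy of regular subgroups inside the $2$-closed group $G=\aut(\cay(H,S))$, the existence of some $G_0\in\Sup^{\min}(\hat H)$ with $G_0\prec_{\hat H}G$ (a minimal element of the down-set of $G$ is minimal in the whole poset, by transitivity of $\prec_{\hat H}$), and the use of the defining property of $\prec_{\hat H}$ to replace $R$ by a conjugate $R^b\le G_0$ are all correct.

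The gap is exactly where you locate it, and your first proposed repair does not work. There is no reason for elements of $\Sup^{\min}(\hat H)$ to be $2$-closed, and minimality cannot force $G_0^{(2)}=G_0$: minimality of $G_0$ with respect to $\prec_{\hat H}$ only excludes proper subgroups $A<G_0$ with $A\prec_{\hat H}G_0$, whereas $G_0^{(2)}$ is an \emph{overgroup} of $G_0$, so $G_0^{(2)}\ne G_0$ creates no conflict with minimality. The correct repair is to apply \cite[Theorem~2.6]{HM} to $G_0^{(2)}$ rather than to $G_0$. Since $G_0$ and $G_0^{(2)}$ have the same $2$-orbits, the stabilizers $(G_0)_{e_H}$ and $(G_0^{(2)})_{e_H}$ have the same orbits on $H$, hence $V(H,(G_0^{(2)})_{e_H})=V(H,(G_0)_{e_H})$ is a CI-S-ring by hypothesis; as $G_0^{(2)}$ is $2$-closed, any two regular subgroups of $G_0^{(2)}$ isomorphic to $H$ are conjugate in $G_0^{(2)}$. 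This produces $g_0\in G_0^{(2)}$ with $(R^b)^{g_0}=\hat H$; and since $G_0\le G$ with $G$ $2$-closed, monotonicity of the $2$-closure gives $G_0^{(2)}\le G^{(2)}=G$, so $bg_0\in G$ and $R$ is conjugate to $\hat H$ in $G$ after all --- the desired contradiction. (Your alternative suggestion, extracting the conjugating element directly from the definition \eqref{eq:CI-S} applied to the isomorphism induced by $b$, can also be made to work, but as written it is a pointer rather than an argument.)
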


In fact, we are going to derive Theorem~\ref{thm:main} by
showing that the condition in Proposition~\ref{prop:condi} holds whenever 
$H \cong D_{2n}$ and $n$ is odd and square-free. 

\begin{thm}\label{thm:main-v2}
Let $n$ be an odd square-free integer larger than $1$, $H \cong D_{2n}$, and 
$G \in \Sup^{\min}(\hat{H})$. Then $V(H,G_{e_H})$ is a CI-S-ring.
\end{thm}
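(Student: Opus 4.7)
The plan is to use the Hirasaka--Muzychuk criterion~\cite[Theorem~2.6]{HM}: since Cayley-digraph automorphism groups are $2$-closed, verifying~\eqref{eq:CI-S} for $\A=V(H,G_{e_H})$ is equivalent to proving that every regular subgroup $K\le G$ with $K\cong H$ is $G$-conjugate to $\hat{H}$. Let $C$ denote the cyclic subgroup of index $2$ in $H$; since $n$ is odd, $C$ is characteristic in $H$, coincides with the set of squares of $H$, and every regular $K\cong H$ in $G$ contains a unique cyclic subgroup $C_K$ of order $n$ playing the analogous role.

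I would first establish that $C$ is an $\A$-subgroup. Basic sets of $\A$ lie in single $G_{e_H}$-orbits, and since the orders of elements are preserved under conjugation by $G_{e_H}$ and inversion, elements of odd order (those in $C$) are segregated from the involutions, yielding $\underline{C}\in\A$. Hence $\delta:=\{C,H\setminus C\}$ is a system of blocks for $G$, the quotient S-ring $\A_{H/C}$ is trivial, and the induced S-ring $\A_C=V(C,(G^C)_{e_C})$ is a CI-S-ring by Muzychuk's classification~\cite{M97}, since $C$ is cyclic of odd square-free order.

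For a regular subgroup $K\cong H$ of $G$, I would proceed in two stages. First, conjugate $C_K$ to $\hat{C}$ within $G$: this uses the CI-property of $\A_C$ together with the auxiliary permutation-group result of Section~\ref{sec:d-groups} applied to the induced group $G^C$, with the minimality $G\in\Sup^{\min}(\hat{H})$ forcing $G^C$ into the restricted class to which that result applies. Second, once $C_K=\hat{C}$, conjugate the reflection coset of $K$ to that of $\hat{H}$ using an element of the block kernel $G_\delta$ combined with a suitable element of $\aut(H)$, exploiting that $g\alpha\in\iso(\A)$ for $g\in\aut(\A)$ and $\alpha\in\aut(H)$.

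The main obstacle is the first stage: conjugating $C_K$ to $\hat{C}$ inside $G$. A priori $G$ could contain regular cyclic subgroups of order $n$ that the S-ring $\A_C$ alone cannot distinguish, and it is precisely the minimality of $G$ together with the permutation-group analysis of Section~\ref{sec:d-groups} that excludes non-conjugate configurations. The odd square-free hypothesis on $n$ enters essentially twice --- to make $C$ characteristic and to invoke the cyclic DCI theorem on $\A_C$ --- while the permutation-group classification carries the technical weight of the argument.
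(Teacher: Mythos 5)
Your high-level skeleton resembles the paper's in outline (invoke the Section~\ref{sec:d-groups} solvability result, isolate the cyclic subgroup of index $2$, use that the restricted S-ring over it is CI, then deal with the outer coset), but two of your steps do not hold up. First, your argument that $C=H_n$ is an $\A$-subgroup is wrong: the basic sets of the transitivity module $V(H,G_{e_H})$ are the orbits of the point stabiliser $G_{e_H}$ acting on $H$ as a permutation group, not by conjugation, and such orbits need not respect element orders at all (for $G=\sym(H)$ the unique non-trivial basic set is all of $H^{\#}$, mixing involutions with elements of order $n$). The paper has to work for this fact: Proposition~\ref{prop:Hn-is-block} derives it from the solvability of $\prec$-minimal $d$-groups (Theorem~\ref{thm:solvable}) via Hall's theorem --- $\hat{H}_n$ lies in a Hall $2'$-subgroup $L$ of $G$, whose orbits must coincide with the two $H_n$-orbits for parity reasons, so $\orb(\hat{H}_n,H)$ is a block system and $H_n$ is an $\A$-subgroup. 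Relatedly, your opening reduction to conjugacy of regular subgroups via \cite[Theorem~2.6]{HM} is stated for $2$-closed groups, and a $\prec$-minimal $G$ need not be $2$-closed; the paper instead verifies the defining identity~\eqref{eq:CI-S} for $\A=V(H,G_{e_H})$ directly.

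Second, and more seriously, your ``second stage'' --- aligning the reflection coset once the cyclic parts agree --- is precisely where the difficulty of the proof lives, and your proposal offers no mechanism for it. The paper introduces the atoms of $\A$ (basic sets of the form $Lx$ with $L\le H_n$ and $x\in H\setminus H_n$), proves they exist whenever $H_n$ is an $\A$-subgroup and $G$ is solvable (Lemma~\ref{lem:atom}), and then runs an induction on $|\pi(n)|$ through the quotients $H/H_p$ and the subgroups $H(p)$ of Lemma~\ref{lem:H(p)} to show that a normalised isomorphism fixing the basic sets inside $H_n$ translates every atom by one common element $c\in H_n$ (Lemma~\ref{lem:f-c}), and that after composing with $\sigma_c^{-1}$ it fixes every basic set (Lemma~\ref{lem:f-fix}). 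Nothing in ``an element of the block kernel combined with a suitable element of $\aut(H)$'' substitutes for this; the Remark following Lemma~\ref{lem:atom} shows that even the existence of atoms fails without solvability, so the outer coset genuinely cannot be handled by soft block-system and automorphism considerations. You also misplace the weight of the argument: you call the cyclic stage the main obstacle, whereas the restriction $\A_{H_n}$ being a CI-S-ring is quoted from \cite{KMPRS}, and it is the outer coset that consumes most of Section~\ref{sec:proof}.
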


A crucial step in the proof of the above theorem is 
to show that the group $G$ given in the theorem is 
solvable. Establishing this property will be the subject of the next section.   
The proof of Theorem~\ref{thm:main-v2} will be presented in Section~\ref{sec:proof}.

\section{$d$-groups}\label{sec:d-groups}

Song et al.~\cite{SLZ} call a permutation group a $d$-group if it 
contains a regular dihedral subgroup. We extend slightly this definition by saying 
that a permutation group of degree $n$ is a \emph{$d$-group} 
if it contains a transitive dihedral subgroup of order $n^*$, where  
\[
n^*:=\begin{cases} n & \text{if~$n$ is even}, \\
2n & \text{if~$n$ is odd}. 
\end{cases}
\]
If $n$ is even, then the $d$-group $R$ we consider is regular. On the other hand, if $n$ is odd, then the stabilizers of $R$ are of order at most 2. Further $R$ contains a cyclic group of odd order which then only intersects trivially the stabilizer of any point so its action is regular. We introduce this terminology in order to be able to prove Theorem \ref{thm:solvable} using an inductive argument. 

As in our case dihedral groups have order at least $4$, it follows immediately that $d$-groups have degree at least 
$3$. If the degree is even, then our definition of a $d$-group coincides 
with the one proposed in~\cite{SLZ}.

Let $G \le \sym(X)$ be a $d$-group of degree $n$ and $K \le G$ be any subgroup. 
We write $K \prec G$ if for every subgroup $H \le G$ such that $H \cong D_{n^*}$ and $H$ is transitive on $X$, there is an element 
$g \in G$ such that $H^g \le K$. The relation $\prec$ is a partial order on the set of 
$d$-groups contained in $\sym(X)$. Note that, for an integer $n > 1$, every group in 
$\Sup^{\min}(\hat{D}_{2n})$ is a $\prec$-minimal $d$-group.
\medskip

The purpose of this section is to prove the following theorem.

\begin{thm}\label{thm:solvable}
Every $\prec$-minimal $d$-group of square-free degree is solvable.
\end{thm}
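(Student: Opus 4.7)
I would prove Theorem \ref{thm:solvable} by induction on the degree $n$. Suppose $G\le\sym(X)$ is a $\prec$-minimal $d$-group of square-free degree $n$ and, for contradiction, assume $G$ is non-solvable. Fix a transitive dihedral subgroup $H\le G$ of order $n^*$ and let $C\le H$ be its characteristic cyclic subgroup of index~$2$; this $C$ is cyclic of odd square-free order (equal to $n$ if $n$ is odd, to $n/2$ if $n$ is even) and acts semiregularly on $X$.

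\textbf{Reduction to the primitive case.} Assume $\delta$ is a nontrivial minimal system of blocks for $G$. Then $G^\delta$ has square-free degree $|\delta|<n$, and an order comparison (using that every normal subgroup of $H$ lies in $C$ or equals $H$) shows that $H^\delta$ is a transitive dihedral subgroup of $G^\delta$ of order $|\delta|^*$, so $G^\delta$ is a $d$-group. Taking a $\prec$-minimal $d$-subgroup $M\le G^\delta$, its full preimage $\tilde M\le G$ satisfies $\tilde M\prec G$; by $\prec$-minimality of $G$ we conclude $\tilde M=G$, whence $M=G^\delta$ is itself $\prec$-minimal and, by the induction hypothesis, solvable. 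A parallel argument for the block-kernel action $G_\delta^\Delta$ on a minimal block $\Delta$ --- producing a transitive dihedral subgroup of $G_\delta^\Delta$ by combining the characteristic cyclic subgroup of $H\cap G_\delta$ with a suitable involution in $G_\delta$ --- shows $G_\delta^\Delta$, and hence $G_\delta$, is solvable. Then $G$ is an extension of solvables, a contradiction. Hence $G$ is primitive.

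\textbf{Primitive case and exhibition of a proper $\prec$-subgroup.} For primitive $G$ of square-free degree I invoke the classification of primitive permutation groups of square-free degree (Liebeck--Praeger--Saxl, Li): $\soc(G)$ is either an elementary abelian group of prime order (so $G\le\AGL(1,p)$, contradicting non-solvability) or a non-abelian simple group $T$ in a sharply restricted action. The presence in $G$ of the semiregular cyclic subgroup $C$ together with an involution inverting it further narrows the possibilities for $(T,\text{action})$ to a short list (essentially $A_p$ on $p$ points, certain $\PSL(2,q)$ actions, and a few further cases). For each surviving non-solvable candidate I take $K:=N_G(C)$: it contains $H$ (as $C$ is characteristic in $H$) and is solvable because $N_G(C)/C_G(C)\hookrightarrow\aut(C)$ is abelian while case-by-case inspection of the primitive socles shows $C_G(C)$ is solvable (often equal to $C$ itself). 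Finally I verify that every transitive dihedral subgroup of $G$ of order $n^*$ has its cyclic index-$2$ subgroup $G$-conjugate to $C$: for odd $n$, the Sylow $p$-subgroups of $G$ for each $p\mid n$ are cyclic and pairwise conjugate, forcing the Sylow-decomposition of any two such cyclic subgroups to agree up to $G$-conjugacy; for even $n$ one first reduces modulo the natural block system formed by the two $C$-orbits. Consequently every transitive dihedral subgroup of $G$ is $G$-conjugate into $K$, so $K\prec G$, while $K<G$ (since $K$ is solvable but $G$ is not), contradicting $\prec$-minimality.

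The main obstacle I anticipate lies in the imprimitive reduction: equipping $G_\delta^\Delta$ with a transitive dihedral subgroup of the right order. The cyclic part comes for free from $H\cap G_\delta$, but producing the required inverting involution in $G_\delta^\Delta$ requires a careful analysis of which involutions of $H$ lie in $G_\delta$ and what $G_\delta$ additionally contributes. A secondary obstacle is establishing the $G$-conjugacy of the cyclic subgroups $C$ across all transitive dihedral subgroups in the primitive case, where the $\PSL(2,q)$-type socles yield the technically most delicate part of the analysis.
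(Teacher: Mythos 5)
Your reduction to the primitive case contains a gap that is, in fact, the main difficulty the paper's proof is designed to overcome. The step ``a parallel argument for the block-kernel action $G_\delta^{\Delta}$ \dots shows $G_\delta^{\Delta}$, and hence $G_\delta$, is solvable'' does not follow from your induction hypothesis. Your hypothesis covers only $\prec$-minimal $d$-groups, and $(G_\delta)^{\Delta}$ (or $(G_{\{\Delta\}})^{\Delta}$) has no reason to be $\prec$-minimal on $\Delta$: even granting that it is a $d$-group, a $d$-group of square-free degree can perfectly well be non-solvable (e.g.\ $S_k$ or a $2$-transitive group with socle $A_k$ or $\PSL(d,q)$). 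Knowing that its $\prec$-minimal $d$-subgroups are solvable says nothing about the group itself. So the possibility that $(G_\delta)^{\Delta}$ is a large non-solvable primitive group is not excluded by your argument, and ruling it out is precisely where all the work lies: the paper shows that when the block size $k$ is composite, $(G_{\{\Delta\}})^{\Delta}$ is primitive with socle $A_k$ or $\PSL(d,q)$ (via the classification of primitive groups with a regular cyclic or dihedral subgroup), eliminates $\PSL(d,q)$ using Zsigmondy's theorem, analyses $\soc(G_\delta)$ as a subdirect product of copies of $A_k$, and then uses $\prec$-minimality to manufacture a \emph{finer} $G$-invariant partition, contradicting the minimality of $\delta$. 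The only case in which the kernel is shown to be solvable is $k=p$ prime, and even there the argument needs $\prec$-minimality of $G$ itself (to get a normal Sylow $p$-subgroup of $G_\varphi$), not an inductive appeal to the blocks.

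A secondary remark: once the imprimitive case is handled correctly, your separate primitive case is not needed in the form you give it. The paper treats the primitive situation uniformly by taking $\delta$ to be the one-block partition, so that $K_0=G$ is itself one of the classified $2$-transitive groups and the same ``finer block system'' contradiction applies; your proposed route via $K=N_G(C)$ and conjugacy of the cyclic halves of all regular dihedral subgroups is plausible for $A_n$ and $S_n$ but would require a genuinely delicate case analysis for the $\PGL(2,q)$-type socles that you have not supplied. I would encourage you to abandon the ``solvable kernel, solvable quotient'' template and instead aim, as the paper does, at contradicting the minimality of the chosen block system.
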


In order to prove Theorem~\ref{thm:solvable}, we follow the the approach of Muzychuk in ~\cite{M99}, 
where he showed that every group in $\Sup^{\min}(\hat{H})$ 
is solvable if $H$ is a cyclic group (see~\cite[Theorem~1.8]{M99}). 
\medskip

Let $H$ be a group isomorphic to $D_{2n}$ and $k$ be an odd divisor $k$ of $n$. 
In what follows, we denote by $H_k$ the unique cyclic subgroup of $H$ of order $k$. 
This should not cause any confusion with the notation $D_{2k}$, which stands for the dihedral group of order 
$2k$ if $k >1$. 

\begin{lem}\label{lem:misha1}
Let $G \le \sym(X)$ be a $d$-group of degree $n > 1$ such that 
$n$ is not divisible by $4$ and let $\delta$ be a system of blocks for 
$G$ with block size $k$ such that $k < n/2$.
\begin{enumerate}[(i)]
\item The group $G^\delta$ is also a $d$-group. 
\item If $G$ is $\prec$-minimal, then $G^\delta$ is also $\prec$-minimal. 
\end{enumerate}
\end{lem}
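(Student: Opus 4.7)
The plan is to produce a transitive dihedral subgroup of $G^\delta$ of order $m^*$, where $m := n/k$, by projecting the one in $G$; this gives (i). Part (ii) will then follow by a lifting argument that transfers a proper $\prec$-refinement of $G^\delta$ back to $G$, contradicting the minimality of $G$.

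For (i), fix a transitive subgroup $H \le G$ with $H \cong D_{n^*}$ and let $C \le H$ be its cyclic subgroup of index $2$. Since $H$ is transitive on $X$ it is transitive on $\delta$, so $H^\delta$ is a transitive subgroup of $G^\delta$ of degree $m$. For a fixed block $\Delta \in \delta$, let $H_{\{\Delta\}} := \{h \in H : \Delta^h = \Delta\}$ (so $|H_{\{\Delta\}}| = n^*/m$), and let $K_H \lhd H$ be the kernel of $H$ on $\delta$; then $H^\delta \cong H/K_H$. The central claim is $K_H = C \cap H_{\{\Delta\}}$. The hypothesis $4 \nmid n$ forces $|C|$ to be odd (it equals $n$ when $n$ is odd and $n/2$ when $n$ is even), so each subgroup of $C$ is the unique subgroup of its order in $C$ and is therefore normal in $H$; in particular $C \cap H_{\{\Delta\}}$ is a normal subgroup of $H$ contained in $H_{\{\Delta\}}$, so lies in $K_H$. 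Conversely, $|C|$ odd implies that every involution of $H$ lies in $H \setminus C$, and conjugation by $C$ sweeps any one such involution through all of them; so if $K_H$ met $H \setminus C$ nontrivially, $K_H$ would contain all of $H \setminus C$, and hence all of $H$, contradicting the transitivity of $H^\delta$ on $\delta$ (note $m \ge 3$, since $k < n/2$).

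With $K_H = C \cap H_{\{\Delta\}}$ established, the quotient $H/K_H$ is generated by the cyclic group $C/K_H$ together with the image of any reflection of $H$, which acts on $C/K_H$ by inversion; so $H/K_H$ is dihedral of order $2|C/K_H|$. The equality $2|C/K_H| = m^*$ is then a short computation from $|C \cap H_{\{\Delta\}}| = |C||H_{\{\Delta\}}|/|C \cdot H_{\{\Delta\}}|$ together with the observation that $C \cdot H_{\{\Delta\}} = H$ if and only if $H_{\{\Delta\}} \not\le C$; checking the three parity cases ($n$ odd; $n$ even with $k$ odd; $n$ even with $k$ even) yields $|H/K_H| = m^*$ in each case, completing (i).

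For (ii), suppose toward a contradiction that $G^\delta$ admits a proper $d$-subgroup $K^* < G^\delta$ with $K^* \prec G^\delta$. Let $\pi \colon G \to G^\delta$ be the natural projection and set $K := \pi^{-1}(K^*)$, a proper subgroup of $G$. For any transitive $H' \le G$ with $H' \cong D_{n^*}$, part (i) shows $(H')^\delta$ is a transitive dihedral subgroup of $G^\delta$ of order $m^*$, so $K^* \prec G^\delta$ furnishes $\bar g \in G^\delta$ with $((H')^\delta)^{\bar g} \le K^*$; any lift $g \in G$ of $\bar g$ then satisfies $(H')^g \le K$. Hence $K$ is itself a $d$-group and $K \prec G$, contradicting the $\prec$-minimality of $G$. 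The main obstacle is the identification $K_H = C \cap H_{\{\Delta\}}$ in (i); this is exactly where the hypothesis $4 \nmid n$ enters, through the resulting oddness of $|C|$ which both pins down the subgroup lattice of $C$ and forces all involutions of $H$ to be reflections.
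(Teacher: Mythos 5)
Your proof is correct and follows essentially the same route as the paper: project the transitive dihedral subgroup $H$ to $\delta$, use the hypothesis $4\nmid n$ to make the index-two cyclic subgroup $C$ have odd order, deduce that the kernel of $H$ on $\delta$ contains no reflection and hence equals $C\cap H_{\{\Delta\}}$, and compute $|H^\delta|=m^*$; part (ii) is the standard pullback of a $\prec$-complete subgroup along $G\to G^\delta$, which the paper delegates to Muzychuk's argument. The only cosmetic difference is that you pin down the kernel via conjugacy of reflections and the product formula $|C\cap H_{\{\Delta\}}|=|C||H_{\{\Delta\}}|/|CH_{\{\Delta\}}|$, whereas the paper argues through semiregularity of $(H_{n^*/2})^\delta$; both hinge on the same parity fact.
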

\begin{proof}
(i): The group $G$ contains a transitive subgroup $H$ such that $H \cong D_n{^*}$. 
Note that $n^*/2$ is odd, so $H_{n^*/2}$ is the cyclic 
subgroup of $H$ of index $2$. 
For every $x \in X$, the stabilizer $H_x$ has order at most $2$, and this 
implies that $H_{n^*/2}$ is semiregular. 
Every subgroup of $(H_{n^*/2})^\delta$ is normal in $H^\delta$ and 
$H^\delta$ is transitive on $\delta$, whence $(H_{n^*/2})^\delta$ is semiregular, since the stabilizer of every point is the same normal subgroup. 
On the other hand, it is easily seen that 
$(H_{n^*/2})^\delta$ is transitive on $\delta$ if the block size $k$ is even or $n$ is odd, 
and intransitive otherwise dividing $\delta$ into two orbits. Thus, 
\begin{equation}\label{eq:order}
|(H_{n^*/2})^\delta|=\begin{cases} 
n/k & \text{if $k$ is even or $n$ is odd}, \\  
n/2k & \text{otherwise}.
\end{cases}
\end{equation}

Assume that $b \in (H \setminus H_{n^*/2}) \cap G_\delta$. 
Then for any generator $a$ of $H_{n^*/2}$ and block $\Delta \in \delta$, 
$\Delta^a=\Delta^{ab}= \Delta^{ba^{-1}}=\Delta^{a^{-1}}$, implying that $a \in G_\delta$ because 
$(H_{n^*/2})^\delta$ is semiregular and $n^*/2$ is odd. 
This, together with Eq.~\eqref{eq:order}, leads to the inequality 
$1=|(H_{n^*/2})^\delta|\ge n/2k$, contradicting the assumption that $k < n/2$.
Thus $G_\delta \cap H$ is a cyclic group, and therefore, 
$H^\delta$ is a transitive dihedral subgroup of $G^\delta$. 
Using Eq.~\eqref{eq:order}, we compute that 
$|H^\delta|=2|(H_{n^*/2})^\delta|=(n/k)^*$, and this establishes part (i). 

(ii) This can be derived using (i) following literally the proof 
of~\cite[Proposition~4.1]{M99}. 
\end{proof}

For positive integers $u$ and $v$, we write $u^i \mid\mid v$ for some integer $i \ge 1$ if 
$u^i \mid v$ and $u^{i+1} \nmid v$.
The next lemma is the analogue of~\cite[Lemma~4.3]{M99}. 

\begin{lem}\label{lem:misha2}
Let $G \le \sym(X)$ be a $\prec$-minimal $d$-group of degree $n$ and 
$\delta$ be a system of blocks for $G$ with block size $k$. 
Assume that there is an odd prime divisor $p$ of $k$ 
such that for every $\Delta \in \delta$, all orbits of a Sylow $p$-subgroup of $(G_{\delta})^{\Delta}$ are of cardinality $p^t$, where $p^t \mid\mid k$. Then 
\begin{enumerate}[(i)]
\item $G$ has a system of blocks $\varphi$ with block size $p$.
\item A Sylow $p$-subgroup of $G_{\varphi}$ is normal in $G$.
\end{enumerate}
\end{lem}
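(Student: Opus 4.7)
My plan is to follow Muzychuk's strategy from \cite[Lemma~4.3]{M99}, which treats the analogous statement for cyclic-kernel minimal supergroups, substituting the dihedral structure via Lemma~\ref{lem:misha1}. The key object is a Sylow $p$-subgroup $P$ of the normal subgroup $G_\delta \lhd G$. The hypothesis guarantees that $P^\Delta$ is a Sylow $p$-subgroup of $(G_\delta)^\Delta$ with all orbits of length $p^t$ on each block $\Delta$, so the orbits of $P$ form a partition of $X$ refining $\delta$ with each part of size $p^t$.

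The central step is to prove that $P$ is the unique Sylow $p$-subgroup of $G_\delta$, i.e.\ $P \lhd G_\delta$; since $G_\delta \lhd G$, this automatically gives $P \lhd G$. I would argue by contradiction via the Frattini factorization $G = G_\delta \cdot N_G(P)$ together with the $\prec$-minimality of $G$: if $P$ were not normal, $N_G(P)$ would be proper in $G$, and I would show that $N_G(P)$ is itself a $d$-group into which every transitive dihedral subgroup of $G$ of order $n^*$ can be conjugated, contradicting $\prec$-minimality. To realize this containment, given any such $H \le G$, I would use Lemma~\ref{lem:misha1}(i) to locate a cyclic subgroup of $H \cap G_\delta$ inside $H_{n^*/2}$ whose Sylow $p$-part $Q$ has order $p^t$; applying Sylow's theorem inside $G_\delta$, I would conjugate $H$ by an element $g \in G_\delta$ so that $Q^g \le P$, and the dihedral structure of $H$ together with the fact that $Q$ is characteristic in the cyclic kernel would force $H^g$ into $N_G(P)$.

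Granted $P \lhd G$, the $P$-orbits form a $G$-block system $\psi$ of block size $p^t$. If $t = 1$, this gives $\varphi = \psi$ directly. If $t > 1$, I would refine $\psi$ by descending through a $G$-invariant chain of characteristic subgroups of $P$ (for instance, via a minimal $G$-invariant normal subgroup of $G$ contained in $P$, which is elementary abelian, and iterating) to produce $\varphi$ with blocks of size exactly $p$. Statement~(ii) is then essentially immediate: the Sylow $p$-subgroup of $G_\varphi$ corresponds to a characteristic section of $P$, and hence is normal in $G$ because $P$ is.

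The principal obstacle is the normality step. The dihedral involutions of $H$ lying outside $G_\delta$ are not subject to Sylow conjugacy inside $G_\delta$, so one must carefully combine the block-wise orbit count from Lemma~\ref{lem:misha1} with the exact hypothesis $p^t \mid\mid k$ to ensure that the whole of $H^g$ — and not merely its cyclic $p$-part — normalizes $P$. A secondary difficulty, in the case $t > 1$, is arranging the chain of characteristic refinements of $P$ so that the successive quotients act with orbits of length exactly $p$ on each $\psi$-block; this relies on the $G$-module structure of $P$ and the assumption $p^t \mid\mid k$ preventing the chain from collapsing prematurely.
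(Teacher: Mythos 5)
Your proposal hinges on a claim that the paper neither makes nor needs: that a Sylow $p$-subgroup $P$ of $G_\delta$ is normal in $G$. Note first that part~(ii) concerns a Sylow $p$-subgroup of $G_\varphi$, the kernel of the action on the block system with blocks of size $p$; since $\varphi$ refines $\delta$, this is a subgroup of $G_\delta$, and its Sylow $p$-subgroup is in general much smaller than one of $G_\delta$ --- you are attempting something strictly stronger than the statement. More importantly, your argument for it does not close. Sylow conjugacy in $G_\delta$ lets you arrange $(E_{p^t})^g\le P$, and $E^g$ normalizes $(E_{p^t})^g$ because that subgroup is characteristic in its cyclic kernel; but normalizing a subgroup of $P$ gives no control over $P$ itself, so you cannot conclude $E^g\le N_G(P)$. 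This is exactly the ``principal obstacle'' you name, and the hypotheses $p^t \mid\mid k$ and the orbit condition do not repair it. The secondary step has the same status: even granted $P\lhd G$ with orbits of size $p^t$, a minimal $G$-invariant subgroup of $P$ need not have orbits of size exactly $p$, so the descent from $p^t$-blocks to $p$-blocks is a second unfilled hole.

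The paper's route avoids both problems. For (i) it takes $\varphi=\orb(H_p,X)$ outright and shows $G_{\{\varphi\}}\prec G$: after conjugating so that $H_{p^t}$ and $E_{p^t}$ lie in a common Sylow $p$-subgroup $P$ of $G_\delta$, the orbit hypothesis forces every $P$-orbit $Y$ to equal $x^{H_{p^t}}=x^{E_{p^t}}$, so $(H_{p^t})^Y$ and $(E_{p^t})^Y$ are regular abelian, hence self-centralizing in $\sym(Y)$, hence both contain $Z(P^Y)\ne 1$; being cyclic of order $p^t$, they then share their unique subgroup of order $p$, giving $(H_p)^Y=(E_p)^Y$ and $\orb(E_p,X)=\varphi$. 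No normality of $P$ is used anywhere in (i). For (ii) the normal Sylow subgroup is built canonically as $P_H=(H_p)^{\Phi_0^*}\cdots(H_p)^{\Phi_{n/p-1}^*}\cap G_\varphi$; it is visibly normalized by $H$ (which permutes the factors by conjugation), is shown to be Sylow in $G_\varphi$ by an order count on each block, and then $N_G(P_H)\prec G$ follows because $P_F$ is conjugate to $P_H$ inside $G_\varphi$ and $F$ normalizes $P_F$ by construction. Your Frattini-style use of $\prec$-minimality is the right instinct, but it only succeeds for this canonically constructed Sylow subgroup of $G_\varphi$, not for an arbitrary Sylow subgroup of $G_\delta$.
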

\begin{proof} 
We imitate the proof of~\cite[Lemma~4.3]{M99}.

(i): Fix a transitive subgroup $H \le G$ such that $H \cong D_{n^*}$ and let  
$\varphi=\orb(H_p,X)$. 
It is sufficient to prove that $G_{\{\varphi\}} \prec G$. 

Let $E$ be another transitive dihedral subgroup of $G$ of order $n^*$. 
Since $p^t \mid k$, it follows that both $H_{p^t}$ and $E_{p^t}$ are contained in 
$G_{\delta}$. There is an element $g \in G_{\delta}$ such that 
$H_{p^t}$ and $(E_{p^t})^g$ are contained in the same 
Sylow $p$-subgroup $P$ of $G_{\delta}$.
We claim that $E^g \le G_{\{\varphi\}}$. 
We will simply write $E$ instead of $E^g$, and hence we aim to show that 
$E \le G_{\{\varphi\}}$, provided that $E_{p^t} \le P$.  

Let $Y$ be an arbitrary $P$-orbit, in particular, write $Y=x^P$. 
Then $Y \subseteq \Delta$ for some $\Delta \in \delta$.
As $P^\Delta$ is contained in 
a Sylow $p$-subgroup of $(G_\delta)^\Delta$ and $Y$ is also a $P^\Delta$-orbit, 
we obtain the bound $|Y| \le p^t$. 
On other hand, as $x^{H_{p^t}} \subseteq Y$, $x^{E_{p^t}} \subseteq Y$, and  
$|x^{H_{p^t}}|=|x^{E_{p^t}}|=p^t$, we deduce that 
$Y=x^{H_{p^t}}=x^{E_{p^t}}$. 

Thus, both permutation groups $(H_{p^t})^Y$ and $(E_{p^t})^Y$ are regular cylic groups. 
This implies that the center $Z(P^Y)$ is contained in both of them, 
in particular, $(H_{p^t})^Y \cap (E_{p^t})^Y \ne 1$. It follows from this that 
$(H_p)^Y=(E_p)^Y$. As $Y$ was chosen to be an arbitrary $P$-orbit, we conclude that 
$\varphi=\orb(H_p,X)=\orb(E_p,X)$. This and the fact that $E_p$ is 
normal in $E$ yield that $E$ permutes the classes in $\varphi$, or equivalently, 
$E \le G_{\{\varphi\}}$, as claimed.

(ii): Write $\varphi=\{\Phi_0,\ldots,\Phi_{n/p-1}\}$. 
Then $H_p \le G_\varphi$ and we can define the subgroup $P_H$ of $G$ as 
\[
P_H:=(H_p)^{\Phi_0^*} \cdots (H_p)^{\Phi_{n/p-1}^*} \cap G_\varphi.
\]
The product $(H_p)^{\Phi_0^*} \cdots (H_p)^{\Phi_{n/p-1}^*}$ is an elementary abelian $p$-group 
of order $p^{n/p}$. Thus, $P_H$ is a $p$-group and hence it is 
contained in a Sylow $p$-subgroup $Q$ of $G_\varphi$.
For each $0 \le i \le n/p-1$, $p=|(H_p)^{\Phi_i}| \le |(P_H)^{\Phi_i}| \le |Q^{\Phi_i}| \le p$. 
It follows that $Q^{\Phi_i}=(H_p)^{\Phi_i}$ for each $0\le i\le n/p-1$, hence 
\[
Q \le Q^{\Phi_0^*} \cdots Q^{\Phi_{n/p-1}^*} \cap G_{\varphi}= (H_p)^{\Phi_0^*} \cdots (H_p)^{\Phi_{n/p-1}^*} \cap G_{\varphi} = P_H,
\] 
which directly implies that $Q=P_H$, i.\,e., $P_H$ is a Sylow $p$-subgroup of $G_{\varphi}$.
Acting by conjugation, the group $H$ permutes the subgroups $(H_p)^{\Phi_i^*}$'s, 
implying that $H \le N_G(P_H)$. 

Let us take any other transitive dihedral subgroup $F \le G$ of order $n^*$. 
The group $P_F$ can be defined as above, and as it is a Sylow $p$-subgroup of 
$G_\varphi$, there is some $g \in G_\varphi$ such that $(P_F)^g=P_H$. 
Since $g \in G_\varphi$, $(P_F)^g=P_{F^g}$, and so $F^g \le N_G(P_{F^g})=N_G(P_H)$. 
We obtain that $N_G(P_H) \prec G$, and the $\prec$-minimality of $G$ implies that 
$N_G(P_H)=G$, finishing the proof of Lemma~\ref{lem:misha2}.
\end{proof}

\begin{cor}\label{cor:misha2}
Let $G \le \sym(X)$ be a $\prec$-minimal $d$-group and 
$\delta$ be a system of blocks for $G$ with block size $k$. 
Assume that there is an odd prime divisor $p$ of $k$ 
such that for every block $\Delta \in \delta$, 
a Sylow $p$-subgroup of $(G_{\delta})^{\Delta}$ has order $p^t$, where 
$p^t \mid\mid k$. Then $G$ admits a system of blocks with block size $p$.
\end{cor}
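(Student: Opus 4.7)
The plan is to deduce Corollary~\ref{cor:misha2} as an immediate consequence of Lemma~\ref{lem:misha2}(i), by showing that the apparently weaker hypothesis on the \emph{order} of a Sylow $p$-subgroup of $(G_{\delta})^{\Delta}$ already forces the corresponding hypothesis of the lemma on the \emph{orbit sizes} of such a Sylow $p$-subgroup. Granting this, one simply applies part~(i) of Lemma~\ref{lem:misha2} to obtain a system of blocks of $G$ of block size $p$.

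To carry this out, I would begin by fixing a transitive dihedral subgroup $H \le G$ of order $n^*$ and singling out the unique cyclic subgroup of $H$ of order $p^t$; call it $H_{p^t}$. Since $p$ is odd, $H_{p^t}$ lies in the characteristic index-$2$ cyclic subgroup $H_{n^*/2}$, which acts semiregularly on $X$ as recorded in the proof of Lemma~\ref{lem:misha1}. In particular, every orbit of $H_{p^t}$ on $X$ has length exactly $p^t$.

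The next step is to verify that $H_{p^t} \le G_{\delta}$. This is the same inclusion asserted (and implicitly used) in the proof of Lemma~\ref{lem:misha2}: using the formula~\eqref{eq:order} for $|(H_{n^*/2})^{\delta}|$ together with a brief case distinction on the parities of $n$ and $k$, the kernel of the action of $H_{n^*/2}$ on $\delta$ has order $k$ or $k/2$, both of which are divisible by $p^t$ because $p$ is odd and $p^t \mid k$. Since this kernel is a subgroup of the cyclic group $H_{n^*/2}$, it contains the unique subgroup of order $p^t$, which is precisely $H_{p^t}$.

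Now fix an arbitrary block $\Delta \in \delta$. Every orbit of $H_{p^t}$ contained in $\Delta$ has length $p^t$, and because $H_{p^t}$ is cyclic of order $p^t$ it acts faithfully on any such orbit; hence the restriction $(H_{p^t})^{\Delta}$ is a $p$-subgroup of $(G_{\delta})^{\Delta}$ of order $p^t$. The hypothesis of the corollary then forces $(H_{p^t})^{\Delta}$ to be a Sylow $p$-subgroup of $(G_{\delta})^{\Delta}$, whose orbits on $\Delta$ are uniformly of size $p^t$; since any two Sylow $p$-subgroups of $(G_{\delta})^{\Delta}$ are conjugate, all of them share this orbit structure. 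The hypothesis of Lemma~\ref{lem:misha2} is therefore fulfilled for every $\Delta \in \delta$, and the lemma delivers the desired block system. The main obstacle is the second step---establishing $H_{p^t} \le G_{\delta}$---since it is where the transitive dihedral structure of $H$ and the formula~\eqref{eq:order} genuinely enter; everything after is routine Sylow-theoretic book-keeping.
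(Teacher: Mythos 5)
Your proof is correct and follows the route the paper clearly intends (the corollary is stated without its own proof): reduce to Lemma~\ref{lem:misha2}(i) by observing that $H_{p^t}\le G_\delta$, so that $(H_{p^t})^\Delta$ is a semiregular $p$-subgroup of order $p^t$ and hence, under the order hypothesis, a Sylow $p$-subgroup of $(G_\delta)^\Delta$ all of whose orbits have size $p^t$. The key inclusion $H_{p^t}\le G_\delta$ is exactly the one asserted at the start of the proof of Lemma~\ref{lem:misha2}, and your verification of it via Eq.~\eqref{eq:order} and the parity case distinction is sound.
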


In the proof of Theorem~\ref{thm:solvable} we shall also use the 
classification of primitive groups containing a regular cyclic group 
or a regular dihedral group. 

\begin{thm}[\cite{J,L}]\label{prim-c}
Let $G \le \sym(X)$ be a primitive group of degree $n > 1$ containing a regular 
cyclic subgroup. Then either $n=p$ is a prime and $G \le \AGL(1,p)$; or 
$G$ is $2$-transitive on $X$, and one of the following holds.
\begin{enumerate}[(a)]
\setlength{\itemsep}{0.4\baselineskip}
\item $n \ge 5$, $n$ is odd, and $G \cong A_n$. 
\item $n \ge 4$ and $G \cong S_n$. 
\item $n=\frac{q^d-1}{q-1}$, $q=r^f$ for a prime $r$, $\PGL(d,q) \le G \le \PGaL(d,q)$.
\item $n=11$ and $G \cong \PSL(2,11)$ or $G \cong M_{11}$.
\item $n=23$ and $G \cong M_{23}$.
\end{enumerate}
\end{thm}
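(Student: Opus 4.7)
The plan is to reduce to the classification of doubly transitive permutation groups in two stages and then inspect the CFSG-based list. For $n = p$ prime, Burnside's classical theorem on transitive groups of prime degree states that either $G \le \AGL(1,p)$ or $G$ is $2$-transitive, giving the first dichotomy in the statement. For composite $n$, I would appeal to the Schur--Wielandt B-group theorem for cyclic groups (see~\cite{Wbook}): any primitive permutation group in $\sym(X)$ containing a regular cyclic subgroup of composite order is automatically $2$-transitive. Thus in both cases, the problem reduces to listing those $2$-transitive permutation groups of degree $n$ that admit a regular cyclic subgroup.

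With $2$-transitivity in hand, the CFSG-based classification of $2$-transitive groups describes the socle of $G$ as either elementary abelian (affine type, forcing $n$ to be a prime power $q^d$) or a nonabelian simple group from a known short list: $A_n$, $\PSL(d,q)$, $\mathrm{PSU}(3,q)$, $\mathrm{Sz}(q)$, ${}^{2}G_2(q)$, the Mathieu groups $M_{11}, M_{12}, M_{22}, M_{23}, M_{24}$, $\mathrm{HS}$, $\mathrm{Co}_3$, and $\mathrm{Sp}(2d,2)$ in their natural actions. The positive entries of the conclusion are easy to verify: a single $n$-cycle supplies a regular cyclic subgroup for $A_n, S_n$, giving (a)--(b); a Singer cycle of order $(q^d-1)/(q-1)$ sits inside $\PGL(d,q)$ and is regular on the projective space, with $\PGaL(d,q)$ being the largest overgroup that still normalizes it, giving (c); and the small exceptional entries (d), (e) follow by direct inspection of the subgroup structures of $\PSL(2,11)$, $M_{11}$, and $M_{23}$.

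The main obstacle is the exclusion direction -- showing that none of the remaining $2$-transitive families houses a regular cyclic subgroup of its natural degree. For proper affine overgroups $G < \AGL(d,q)$ with $d \ge 2$, one analyzes the intersection of a putative regular cyclic subgroup with the translation subgroup (which is elementary abelian) and then invokes Hering's classification of the transitive subgroups of $\mathrm{GL}(d,q)$ on nonzero vectors to force $G$ into the Singer/$\PGaL$ shape already captured by (c). For the remaining almost simple candidates -- $\mathrm{PSU}(3,q)$, $\mathrm{Sz}(q)$, ${}^{2}G_2(q)$, the Mathieu groups $M_{12}, M_{22}, M_{24}$, $\mathrm{HS}$, $\mathrm{Co}_3$, and $\mathrm{Sp}(2d,2)$ -- a regular cyclic subgroup would demand an element of order equal to the natural degree, and this is ruled out for each family by element-order or Sylow inspection of the corresponding simple group. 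This case-by-case verification against the CFSG list is the most delicate part of the argument; the rest amounts to classical reductions and to recognising the known Singer/$n$-cycle constructions in the positive cases.
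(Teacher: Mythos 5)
The paper does not prove this statement at all---it is imported verbatim from \cite{J} and \cite{L}---and your outline is essentially the argument given in those sources: the Burnside/Schur reduction (prime degree versus $2$-transitivity for composite cyclic regular subgroups, as in \cite{Wbook}) followed by a CFSG-based inspection of the $2$-transitive groups for an $n$-cycle. The only soft spot is the affine case $n=p^d$, $d\ge 2$, where invoking Hering is not really the right tool; the standard elimination there is an element-order computation in $\AGL(d,p)$ showing no affine map has order $p^d$ once $d\ge 2$ (apart from the degenerate $\AGL(2,2)\cong S_4\cong \PGL(2,3)$ already covered by (b) and (c)), but this is a local repair, not a flaw in the strategy.
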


\begin{thm}[\cite{SLZ}]\label{prim-d}
Let $G \le \sym(X)$ be a primitive group of square-free degree $n$ containing a regular 
dihedral subgroup. Then 
$G$ is $2$-transitive on $X$, and one of the following holds.
\begin{enumerate}[(a)]
\setlength{\itemsep}{0.4\baselineskip}
\item $n=22$ and $G \cong M_{22}.2$.
\item $n \ge 6$ and $G \cong S_n$.
\item $n=q+1$, $q=r^f$ for a prime $r$, $\PGL(2,q) \le G \le \PGaL(2,q)$.  
\end{enumerate}
\end{thm}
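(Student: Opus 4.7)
The plan is to combine the O'Nan--Scott theorem, the classification of primitive groups of square-free degree, and the Liebeck--Praeger--Saxl classification of maximal factorisations of almost simple groups, paralleling the proof of Theorem~\ref{prim-c} by Jones and Li. First, since a regular dihedral subgroup $R \le G$ satisfies $|R|=n$ and dihedral groups of order at least $4$ are nonabelian of even order, we have $n=2m$ with $m$ odd and square-free. Applying the O'Nan--Scott theorem to $G$ primitive of degree $n$: the affine case forces $n$ to be a prime power, which combined with $n$ even and square-free leaves only $n=2$; product and twisted wreath types give degree $n=\ell^k$ with $k\ge 2$, forcing a square divisor; diagonal type gives degree $|T|^{k-1}$ for a nonabelian simple group $T$, and since $4 \mid |T|$, the degree again has a square divisor. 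Hence $\soc(G)=T$ is a nonabelian simple group transitive on $X$ and $G$ is almost simple.

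Next I would invoke the classification of primitive almost simple groups of square-free degree (Liebeck--Saxl, Li--Seress) to enumerate the candidate pairs $(T,M)$ with $M=G_x$ a maximal subgroup. For each candidate, $R$ yields a factorisation $G=R\cdot M$ with $R\cap M=1$, and the Liebeck--Praeger--Saxl classification of maximal factorisations of almost simple groups provides the tool to decide when a dihedral complement of order $n$ exists. Running through the list, the surviving cases are: $G\cong S_n$ with $R\le S_n$ the standard regular embedding of a dihedral group of order $n$, yielding (b); the natural action of $\PGL(2,q)\le G\le \PGaL(2,q)$ on the projective line over $\mathbb{F}_q$ of degree $q+1$, where $R$ is generated by the cyclic subgroup of order $(q+1)/2$ inside the non-split torus together with a fixed-point-free involution inverting it, giving (c); and the sole sporadic survivor $M_{22}.2$ acting $2$-transitively on $22$ points, giving (a).

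The main obstacle will be the almost simple case analysis, in particular ruling out the higher-dimensional projective candidates $\PGL(d,q)\le G\le \PGaL(d,q)$ of degree $(q^d-1)/(q-1)$ with $d\ge 3$ that appear in case (c) of Theorem~\ref{prim-c}: here a regular cyclic Singer subgroup exists, but one must verify that no dihedral subgroup of the same order acts regularly, using the structure of Singer normalisers inside $\PGaL(d,q)$. Similarly, the sporadic candidates $M_{11}$, $M_{23}$, and $\PSL(2,11)$ from Theorem~\ref{prim-c} must be excluded by direct inspection of their maximal subgroup structure, and the many degree-constrained small actions of other sporadic simple groups must be ruled out one by one. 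The $2$-transitivity conclusion then drops out as a byproduct, since all three surviving families are already $2$-transitive in their listed natural actions.
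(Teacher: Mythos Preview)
The paper does not prove Theorem~\ref{prim-d}; it is quoted verbatim from~\cite{SLZ} and used as a black box in the proof of Theorem~\ref{thm:solvable}. So there is no ``paper's own proof'' to compare your proposal against.

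That said, your sketch is a faithful outline of how~\cite{SLZ} actually proceeds: O'Nan--Scott reduces to the almost simple case (your elimination of affine, product, twisted wreath, and diagonal types via the square-free hypothesis is correct), and then one runs through the Liebeck--Saxl / Li--Seress list of primitive almost simple groups of square-free degree, using the Liebeck--Praeger--Saxl factorisation tables to decide when a regular dihedral complement exists. Two minor remarks: first, the sporadic cases $\PSL(2,11)$, $M_{11}$, $M_{23}$ from Theorem~\ref{prim-c} have odd degree $11$ or $23$, so they are already excluded by your observation that $n=2m$ is even---no direct inspection is needed. Second, for the higher-rank projective candidates $\PGL(d,q)$ with $d\ge 3$, the degree $(q^d-1)/(q-1)$ is even only when $q$ is odd and $d$ is even; the genuine work there is to show that the Singer normaliser in $\PGaL(d,q)$ contains no dihedral subgroup of the right order acting regularly, which is indeed the crux of the case analysis in~\cite{SLZ}. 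Your proposal identifies this correctly as the main obstacle, but as written it remains a plan rather than a proof: the actual elimination of each family requires the detailed subgroup-structure arguments carried out in~\cite{SLZ}.
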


Everything is prepared to derive the main result of this section.

\begin{proof}[Proof of Theorem~\ref{thm:solvable}] 
Assume the contrary and suppose 
that $G$ is a counter example of the smallest degree $n$. 
Let $\delta=\{\Delta_0,\ldots,\Delta_{m-1}\}$ be a minimal system of blocks for $G$ with 
block size $k$ (hence $n=km$). 

Assume for the moment that $k=p$ for a prime $p$. 
Then the kernel $G_\delta$ is solvable.  
This is clear if $p=2$. If $p >2$, then a Sylow $p$-subgroup of $G_\delta$ is normal 
in $G$ due to Lemma~\ref{lem:misha2}(ii). 
This leads to the solvability of $(G_\delta)^{\Delta_i}$ 
for every $0 \le i\le m-1$. Now, as 
$G_\delta \le (G_\delta)^{\Delta_0^*} \cdots (G_\delta)^{\Delta_{m-1}^*}$, it follows that 
$G_\delta$ is indeed solvable, whence $G^\delta$ is a non-solvable group of degree $n/p$, 
in particular, $n > 2p$. However, then by Lemma~\ref{lem:misha1}(i)--(ii), 
$G^\delta$ is a $\prec$-minimal non-solvable $d$-group of degree $n/p$. This 
contradicts the assumption that $G$ is a counter example of the smallest degree.

So we may assume the $k$ is a composite number. 
Note that, if $k=2p$ for a prime $p > 2$, then it follows from Lemma~\ref{lem:misha2}(i) that 
there is a system of blocks for $G$ with block size $p$. In view of the above paragraph, 
this leads to a contradiction, and we may also assume that $k$ has at least two odd prime 
divisors. 

For $0 \le i \le m-1$, define the subgroups 
\[
K_i=(G_{\{\Delta_i\}})^{\Delta_i}~\text{and}~T_i=\soc(K_i).
\]
Note that, if $0 \le i, j \le m-1$ and $i \ne j$, then the groups $K_i$ and $K_j$ are isomorphic. 
Since $\delta$ is a minimal system of blocks, $K_i$ is a primitive group. 

For the rest of the proof we fix $H$ to be a transitive subgroup of $G$ such that $H \cong D_{n^*}$. 
Then $H_{\{\Delta_i\}}$ contains a regular cyclic or dihedral subgroup, 
and therefore, $K_i$ is one of the groups given in either 
Theorem~\ref{prim-c}~or Theorem~\ref{prim-d}. Using also that $|\Delta_i|=k$ and $k$ has distinct odd prime divisors, 
it follows that $K_i$ is $2$-transitive on $\Delta_i$ and 
$T_i$ is isomorphic to a non-abelian simple group $T$ such that either $T=A_k$, or 
$T=\PSL(d,q)$, $k=\frac{q^d-1}{q-1}$, and $q=r^f$ for a prime $r$.  
Observe that $H_\delta \ne 1$, and so $(G_\delta)^{\Delta_i}$ is a non-trivial 
normal subgroup of $K_i$. We have that $\soc((G_\delta)^{\Delta_i})=\soc(K_i)=T_i$.
\begin{claim1}
$T=A_k$. 
\end{claim1}
Suppose the contrary. Then $T=\PSL(d,q)$, $k=\frac{q^d-1}{q-1}$, and $q=r^f$ for a prime $r$. 
According to Zsigmondy's Theorem~\cite{Zs} one of the following holds:
\begin{enumerate}[(a)]
\item $r^{df}-1$ has a primitive prime divisor $s$, i.\,e., 
$s \mid (r^{df}-1)$ and $s \nmid (r^i-1)$ if $1 \le i < df$. 
\item $(df,r)=(6,2)$ or $(df,r)=(2,2^u-1)$ for an integer $u \ge 2$. 
\end{enumerate}

Suppose that (a) occurs. It is not difficult to show that $s$ is odd and it does not divide $f$. 
It follows from Theorems~\ref{prim-c} and \ref{prim-d} that 
$(G_\delta)^{\Delta_i} \le K_i \le \PGaL_d(q)$, and hence 
a Sylow $s$-subgroup of $(G_\delta)^{\Delta_i}$ has order $s^t$ such that 
$s^t \mid\mid k$. By Corollary~\ref{cor:misha2}, $G$ admits a system of blocks with block size 
$s$. Clearly, this refines $\delta$, which is in contradiction with the minimality of $\delta$. 

Now, suppose that (b) occurs. If $(df,r)=(2,2^u-1)$, then $k=2^u$, contradicting that $k$ has odd prime divisors. 
If $(df,r)=(6,2)$, then as $k$ is square-free with at least two odd prime divisors, 
we deduce that $d=3$, $r^f=4$, and $k=21$. However, then a Sylow $7$-subgroup of 
$(G_\delta)^{\Delta_i}$ has orbits of size $7$. By Lemma~\ref{lem:misha2}, $G$ admits a system of blocks with block size $7$, which refines $\delta$. This contradicts the minimality of $\delta$, and 
completes the proof of Claim~1.
\medskip

We determine next the socle of $G_\delta$. 
Let $M$ be a minimal normal subgroup of $G_\delta$ and define the set 
$[M]:=\{0 \le j \le m-1 \mid M^{\Delta_j} \ne 1\}$. 
For every $j \in [M]$, $M \cong M^{\Delta_j}=T_j$. 
Let $L$ be another minimal normal subgroup of $G_{\delta}$. 
If $j \in [M] \cap [L]$, then $[M^{\Delta_j},L^{\Delta_j}]=T_j \ne 1$. However, 
$[M,L]=1$, a contradiction. Thus $[M] \cap [L]=1$. Therefore, listing the minimal normal 
subgroups of $G_\delta$ as $M_0,\ldots,M_{l-1}$, we have that 
the subsets $[M_0],\ldots,[M_{l-1}]$ form a partition of $\{0,\ldots,m-1\}$. 
We may choose the indices so that for every $0 \le i \le l$, ]
$i \in [M_i]$. By definition, $\soc(G_{\delta})=M_0\cdots M_{l-1}$. 
Let $0 \leq i \leq m-1$. Recall that $T_i=\soc((G_\delta)^{\Delta_i})$. 
For every $g \in T_i$, define the permutation $g^*$ of $X$  acting as 
\[ 
x^{g^*} =
     \begin{cases} x^g & \mbox{if}~x \in \Delta_i, \\ x & \mbox{if}~x \notin \Delta_i, 
     \end{cases}\quad x \in X,
\] 
and let $T_i^*=\{ g^* \mid g \in T_i\}$.

\begin{claim2}
$\soc(G_\delta)=(T_0^* \cdots T_{m-1}^*) \cap G_\delta$. 
\end{claim2}
Let $N=(T_0^* \cdots T_{m-1}^*) \cap G_\delta$. 
For every $j \in 0 \le i \le l-1$ and $j \in [M_i]$, $(M_i)^{\Delta_i}=T_i$, whence 
$M_i \le \prod_{j \in [M_i]}T_i^* \le (T_0^* \cdots T_{m-1}^*)$. 
This shows that $\soc(G_\delta) \le N$. As $G_\delta$ permutes he subgroups $T_i^*$, $0 \le i \le m-1$, 
it follows that $N \lhd G_\delta$. As for every $0 \le i \le m-1$, 
$N^{\Delta_i}=T_i$, $N$ is a subdirect product of the groups $T_i$, $0 \le i \le m-1$. 
Using also that each $T_i \cong T$ and $\soc(G_\delta) \cong T^l$, we deduce that 
$N \cong T^{l'}$ for some integer $l' \ge l$. Now, if $l' > l$, then $C_N(\soc(G_\delta)) \ne 1$.
However, $C_N(\soc(G_\delta)) \le C_{G_\delta}(\soc(G_\delta))=1$ also holds, and hence 
$l=l'$, or equivalently, $\soc(G_\delta)=N$, as claimed. 
\medskip

Recall that $H \le G$ is a transitive subgroup isomorphic to $D_{n^*}$. Let $p$ be an odd  
prime divisor of $k$, and let $\varphi=\orb(H_p,X)$. The final contradiction arises from 
the following claim.

\begin{claim3}
$G_{\{\varphi\}} \prec G$.
\end{claim3}
Let $E$ be an arbitrary transitive dihedral subgroup of $G$ of order $n^*$. 
We have to find an element $g \in G$ such that $E^g \le G_{\{\varphi\}}$. 

Both groups $H_p$ and $E_p$ are contained $G_{\delta}$, and for every $0 \le i \le m-1$. 
$T_i \cong T=A_k$ due to Claim~1. 
This yields that $(H_p)^{\Delta_i}, (E_p)^{\Delta_i}  \le T_i$ and there exists an 
element $t_i \in T_i$ such that 
\begin{equation}\label{eq:ti}
((E_p)^{\Delta_i})^{t_i}=(H_p)^{\Delta_i},\quad 0 \le i \le m-1 . 
\end{equation}

Define the subsets 
\[
\Gamma_i=\bigcup_{j \in [M_i]}\Delta_j,\quad 0 \le i \le l-1.
\]

Recall that $N=(T_0^* \cdots T_{m-1}^*) \cap G_\delta$. 
By Claim~2, $N^{\Gamma_i}=\soc(G_\delta)^{\Gamma_i}=(M_0 \cdots M_{l-1})^{\Gamma_i}=M_i^{\Gamma_i}$. 
In what follows, As $M_i$ fixes pointwise $X \setminus \Gamma_i$, we regard $m_i$ as 
a permutation of $\Gamma_i$.

Fix $0 \le i \le l-1$. We have seen that the mapping $M_i \to T_i$, $m \mapsto m^{\Delta_i}$ 
($m \in M_i$) is an isomorphism. Let $m_i \in M_i$ be the element satisfying $m_i^{\Delta_i}=t_i$. We claim that 
\begin{equation}\label{eq:mi}
((E_p)^{\Gamma_i})^{m_i}=(H_p)^{\Gamma_i}.
\end{equation}
Indeed, as $\Delta_i \subseteq \Gamma_i$, Eq.~\eqref{eq:mi} is equivalent to the equality 
\[ 
((E_p)^{\Delta_i})^{m_i^{\Delta_i}}=(H_p)^{\Delta_i},
\]
which is true by Eq.~\eqref{eq:ti} and the fact that $m_i^{\Delta_i}=t_i$. 

Now, let $g=m_0 \ldots m_{l-1}$. Then $g \in N$, and for every $0 \le i \le l-1$, we can write 
\[
((E_p)^g)^{\Gamma_i}=((E_p)^{\Gamma_i})^{m_i}=(H_p)^{\Gamma_i},
\]
which shows that $(E_p)^g=H_p$. Consequently, $\orb((E_p)^g,X)=\orb(H_p,X)=\varphi$, implying 
that $E^g \le G_{\{\varphi\}}$, as required.
\end{proof}

We conclude the section with a useful property of $\prec$-minimal $d$-groups.

\begin{prop}\label{prop:Hn-is-block}
Let $G \le \sym(X)$ be a $\prec$-minimal $d$-group of square-free degree $2n$,  
$H$ be a regular subgroup of $G$ such that $H \cong D_{2n}$. 
Then $\orb(H_n,X)$ is a system of blocks for $G$. 
\end{prop}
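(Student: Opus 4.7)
The plan is to reduce the proposition to showing that, for every prime $p$ dividing $n$, the partition $\orb(H_p,X)$ is a system of blocks for $G$. This reduction works because $H_n$ is cyclic of square-free order $n$, and hence equals the internal direct product $\prod_{p\mid n}H_p$ of its Sylow subgroups; consequently $\orb(H_n,X)$ coincides with the join $\bigvee_{p\mid n}\orb(H_p,X)$ in the partition lattice of $X$, and joins of $G$-invariant partitions are $G$-invariant.

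Fix an odd prime $p\mid n$. I would first record the uniqueness observation that any system of blocks $\delta'$ for $G$ with block size $p$ must coincide with $\orb(H_p,X)$: the block-stabilizer in $H$ has order $|H|/|\delta'|=p$, and since the only elements of $H$ of order $p$ lie in the cyclic subgroup $H_n$, this stabilizer equals the unique order-$p$ subgroup $H_p$; the subgroup $H_p$ is normal in $H$ (being characteristic in the normal cyclic subgroup $H_n$), so the core of the block-stabilizer in $H$ equals $H_p$ itself, yielding $H_{\delta'}=H_p$. The two $H_p$-orbits on $X$ of size $p$ are then unions of blocks of $\delta'$, hence equal to blocks by cardinality. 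To exhibit such a system of blocks, I invoke Corollary~\ref{cor:misha2} applied to the trivial system $\delta=\{X\}$ of block size $k=2n$: its hypothesis reduces to requiring a Sylow $p$-subgroup of $G$ to have order $p$, since $p^t\mid\mid 2n$ forces $t=1$ by square-freeness of $n$. Whenever $|G|_p=p$ the corollary immediately produces the desired block system.

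When $|G|_p>p$ the situation requires extra work. Here I would use that $G$ is solvable by Theorem~\ref{thm:solvable} and thus imprimitive, since $2n$ is not a prime power; taking a proper nontrivial minimal system of blocks $\delta$ of $G$ and passing to the $\prec$-minimal $d$-group $G^\delta$ of strictly smaller degree via Lemma~\ref{lem:misha1}, I iterate until the Sylow $p$-condition required by Corollary~\ref{cor:misha2} becomes satisfied in the current quotient; lifting the produced block system back then yields a block system of $G$ of block size $p$, which by the uniqueness step equals $\orb(H_p,X)$. The main obstacle I anticipate is verifying that this iterative descent terminates productively, which I expect to follow from a careful analysis of how the Sylow $p$-part of $G$ distributes across the successive kernels $G_\delta$; square-freeness of $n$ forces each $p$-layer to be cyclic of order $p$, and $\prec$-minimality should ensure that at some stage this cyclic factor is exposed as the full Sylow $p$-subgroup of the relevant quotient.
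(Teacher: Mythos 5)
Your reduction to the claim ``$\orb(H_p,X)$ is a system of blocks for every prime $p\mid n$'' is logically valid (the join of block systems is a block system, and the join of the $\orb(H_p,X)$ is indeed $\orb(H_n,X)$), and your uniqueness observation and the application of Corollary~\ref{cor:misha2} to the trivial partition $\delta=\{X\}$ correctly dispose of the case where a Sylow $p$-subgroup of $G$ has order exactly $p$. But the remaining case, $|G|_p>p$, is where the proof actually has to happen, and your proposed descent does not close it. The concrete flaw is in the lifting step: a system of blocks of $G^\delta$ with block size $p$ pulls back to a system of blocks of $G$ whose classes are unions of $p$ blocks of $\delta$, i.e.\ of size $p\cdot k$ where $k$ is the block size of $\delta$ --- not of size $p$. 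So even if the Sylow condition eventually holds in some iterated quotient, you do not obtain a block system of $G$ of block size $p$, and the uniqueness step has nothing to bite on. Moreover, no argument is given that the Sylow $p$-condition is ever reached along the descent; you acknowledge this yourself. Note also that you are proving a statement strictly stronger than the proposition (a block system of size $p$ for \emph{every} $p\mid n$), which is why the difficulty escalates; the proposition only needs the single partition into the two $H_n$-orbits.

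The paper's proof avoids all of this with one application of Hall's theorem. By Theorem~\ref{thm:solvable}, $G$ is solvable, so $H_n$ (an odd-order subgroup) lies in a Hall $2'$-subgroup $L$ of $G$. Since $|L|$ is odd and $|X|=2n$ is even, $L$ is intransitive; its orbits are unions of the two $H_n$-orbits of size $n$, hence $\orb(L,X)=\orb(H_n,X)=\varphi$. For any other regular dihedral subgroup $E$, conjugacy of Hall subgroups in solvable groups gives $g$ with $(E_n)^g\le L$, forcing $\orb((E_n)^g,X)=\varphi$ and $E^g\le G_{\{\varphi\}}$; thus $G_{\{\varphi\}}\prec G$ and $\prec$-minimality yields $G_{\{\varphi\}}=G$. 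If you want to salvage your route, the viable fix is not the descent but this same Hall-subgroup idea (or, for a single prime, Sylow conjugacy combined with Lemma~\ref{lem:sylow}-type normality); as written, your argument for the case $|G|_p>p$ fails.
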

\begin{proof}
Let $\varphi=\orb(H_n,X)$. It is sufficient to show that $G_{\{\varphi\}} \prec G$. 
By Theorem~\ref{thm:solvable}, $G$ is a solvable, hence due to Hall's theorem, $H_n$ is 
contained in a Hall $2'$-subgroup $L$ of $G$. Since $|L|$ is odd, $L$ is intransitive 
and $\varphi=\orb(L,X)$. Let $E$ be another regular subgroup of $G$ such that $E \cong D_{2n}$. 
There exists $g \in G$ such that $(E_n)^g \le L$. This implies in turn that,  
$\orb((E_n)^g,X)=\orb(L,X)=\varphi$, $E^g \le G_{\{\varphi\}}$, and so 
$G_{\{\varphi\}} \prec G$, as required. 
\end{proof}
\section{Proof of Theorem~\ref{thm:main-v2}}\label{sec:proof}

Theorem~\ref{thm:main-v2} will be derived through a series of lemmas. 
In the following lemmas we assume that 
\begin{itemize}
    \item $H$ is a group isomorphic to $D_{2n}$, where $n$ is an odd and square-free,  
    \item $G \in \Sup(\hat{H})$ is a solvable group, and 
    \item $\A=V(H,G_{e_H})$.
\end{itemize}
Furthermore, let $\pi(n)$ denote the set of prime divisors of $n$. 

\begin{lem}\label{lem:sylow}
Let $\delta$ be a system of blocks for $G$ with block size $p$ for a prime 
$p \in \pi(n)$, and $P$ be a Sylow $p$-subgroup of $G_\delta$.  
Then $\hat{H}_p \le P$ and $P=O_p(G)$.
\end{lem}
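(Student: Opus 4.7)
The plan is to first locate $\hat{H}_p$ inside $G_\delta$, then show that $G_\delta$ has a unique Sylow $p$-subgroup $P$, and finally identify $P$ with $O_p(G)$ via a divisibility argument on the action of $G$ on $\delta$.

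First, since $\hat{H}$ acts regularly on $H$ and preserves $\delta$, the stabilizer in $\hat{H}$ of any block is a subgroup $\hat{K}\le \hat{H}$ with $|K|=p$, and the block containing $e_H$ is precisely $K$ viewed as a subset of $H$. Because $H\cong D_{2n}$ with $n$ odd square-free and $p\in\pi(n)$ odd, the only subgroup of $H$ of order $p$ is the characteristic subgroup $H_p$ of the cyclic kernel $H_n$; hence the blocks of $\delta$ are exactly the right cosets of $H_p$ in $H$. Normality of $H_p$ in $H$ then shows that right multiplication by any element of $H_p$ fixes each such coset setwise, which gives $\hat{H}_p\le G_\delta$.

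Next, fix $\Delta\in\delta$. Since $|\Delta|=p$, the group $(G_\delta)^\Delta$ is a primitive subgroup of $\sym(\Delta)$ that contains the transitive cyclic subgroup $(\hat{H}_p)^\Delta$, and it is solvable because $G$ is. A solvable primitive subgroup of a symmetric group of prime degree is contained in $\AGL(1,p)$, whose unique Sylow $p$-subgroup is cyclic of order $p$; hence $(\hat{H}_p)^\Delta$ is the unique Sylow $p$-subgroup of $(G_\delta)^\Delta$. Viewing $G_\delta$ as a subdirect product in $\prod_{\Delta\in\delta}(G_\delta)^\Delta$, any Sylow $p$-subgroup of $G_\delta$ maps into $(\hat{H}_p)^\Delta$ in each coordinate, so it lies in $G_\delta\cap\prod_{\Delta}(\hat{H}_p)^\Delta$; the latter is itself a $p$-group, so it must coincide with the (therefore unique) Sylow $p$-subgroup $P$ of $G_\delta$. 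Since $G_\delta\lhd G$ and $G$ permutes Sylow $p$-subgroups of $G_\delta$ by conjugation, uniqueness forces $P\lhd G$, and hence $P\le O_p(G)$.

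For the reverse inclusion, $O_p(G)\lhd G$, so its orbits on $\delta$ form a system of blocks for the transitive action of $G$ on $\delta$ and therefore share a common size, which is a $p$-power dividing $|\delta|=2n/p$. Since $n$ is square-free and $p$ is odd, $\gcd(p,2n/p)=1$, forcing these orbits to be singletons, i.e.\ $O_p(G)\le G_\delta$. Uniqueness of $P$ then yields $O_p(G)\le P$, and we conclude $P=O_p(G)$; as $\hat{H}_p$ is a $p$-subgroup of $G_\delta$, it automatically lies in $P$. The step I expect to require the most care is the first one, namely identifying the blocks of $\delta$ explicitly as the right cosets of $H_p$, which rests on the fact that $H_p$ is the unique subgroup of $D_{2n}$ of order $p$ when $p$ is an odd divisor of the square-free integer $n$; once this is in place, solvability funnels $(G_\delta)^\Delta$ into $\AGL(1,p)$ and the remaining steps fit together routinely.
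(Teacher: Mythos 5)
Your proof is correct and follows essentially the same route as the paper: identify the blocks of $\delta$ as the cosets of $H_p$, use solvability (Galois's theorem on transitive solvable groups of prime degree) to pin down $(\hat{H}_p)^\Delta$ as the unique Sylow $p$-subgroup of $(G_\delta)^\Delta$, realise $P$ as $G_\delta\cap\prod_\Delta(\hat{H}_p)^{\Delta^*}$ so that it is the unique, hence characteristic, Sylow $p$-subgroup of $G_\delta$ and thus normal in $G$, and finally get $O_p(G)\le G_\delta$ from a coprimality argument on the induced action on $\delta$. Your justification of that last inclusion is in fact spelled out a little more carefully than in the paper, but the argument is the same.
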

\begin{proof}
Note that $\delta=\orb(\hat{H}_p,H)$, in particular, $\hat{H}_p \le G_\delta$. 
Write $\delta=\{\Delta_0,\ldots,\Delta_{m-1}\}$ (hence $2n=mp$). 
For every $0 \le i \le m-1$, since $G$ is solvable, it follows that
$(G_{\{\Delta_i\}})^{\Delta_i} \le N_{\sym(\Delta_i)}((\hat{H}_p)^{\Delta_i})$. 
Clearly, 
\[
G_\delta \le \mathcal{G}:=(G_\delta)^{\Delta_0^*}  \cdots (G_\delta)^{\Delta_{m-1}^*}.
\]
It is easily seen that $\mathcal{P}:=(\hat{H}_p)^{\Delta_i^*} \cdots 
(\hat{H}_p)^{\Delta_{m-1}^*}$ is the Sylow 
$p$-subgroup of $\mathcal{G}$, which is normal in $\mathcal{G}$. 
These imply that $P=G_\delta \cap \mathcal{P}$ 
and $P$ is characteristic in $G_\delta$. 
Using also that $G_\delta \triangleleft G$, we obtain that $P \triangleleft G$, 
and so $P \le O_p(G)$. Also, $\hat{H}_p \le G_\delta \cap \mathcal{P}=P$. 
 
On the other hand, $\orb(O_p(G),H)=\delta$, by which, $O_p(G) \le G_\delta$. As $P$ is the unique Sylow $p$-subgroup of $G_\delta$, $O_p(G) \le P$ also holds, and we conclude that $P=O_p(G)$. 
\end{proof}

\begin{lem}\label{lem:H(p)}
Let $\delta$ be a system of blocks for $G$ with block size $p$ for a prime 
$p \in \pi(n)$, and let 
\[
H(p)=\{ x \in H \mid O_p(G)_x=O_p(G)_{e_H}\}.
\]
Then $H(p)$ is an $\A$-subgroup and 
\[
\forall X \in \cS(A),~X \not\subset H(p) \implies H_p \le \stab(X).
\]
\end{lem}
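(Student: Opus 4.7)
The plan is to recast $H(p)$ as the intersection of $\hat{H}$ with a normalizer in $G$, and then to exploit the fact that $P := O_p(G)$ is a $p$-group whose point-stabilizers have index $p$, hence are normal in $P$.

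I would first recast $H(p)$ algebraically. Using $e_H^{\hat{x}} = x$ and $P \triangleleft G$, one computes $P_x = \hat{x}^{-1} P_{e_H} \hat{x}$, whence $x \in H(p)$ iff $\hat{x} \in N_G(P_{e_H})$. This gives $\widehat{H(p)} = \hat{H} \cap N_G(P_{e_H})$, so $H(p)$ is a subgroup of $H$. For the $\A$-set property I would note that for $\sigma \in G_{e_H}$ the conjugate $\sigma^{-1} P_{e_H} \sigma$ lies in $P$ (since $P \triangleleft G$) and fixes $e_H$, so it equals $P_{e_H}$ by cardinality; hence $G_{e_H} \le N_G(P_{e_H})$ and $H(p)$ is $G_{e_H}$-invariant, i.e., a union of basic sets of $\A$.

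The decisive observation comes next: since $\hat{H}_p \le P$ by Lemma~\ref{lem:sylow} and $\hat{H}_p$ is transitive on each block of $\delta$, the $P$-orbits on $H$ coincide with the blocks of $\delta$, so $P_{e_H}$ has index $p$ in the $p$-group $P$ and is therefore normal in $P$. Hence on each block $\Delta \in \delta$ the common stabilizer $K_\Delta := P_y$ ($y \in \Delta$) is a well-defined normal subgroup of $P$ of index $p$, and $\Delta \subseteq H(p) \Leftrightarrow K_\Delta = P_{e_H}$. In particular, $H(p)$ is a union of blocks of $\delta$ and contains $\Delta_0 = H_p$.

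For the second assertion, let $X \in \cS(\A)$ with $X \not\subset H(p)$, so $X \cap H(p) = \emptyset$. Fix $x \in X$; the plan is to show $\Delta_x \subseteq X$. Since $K_{\Delta_x} \ne P_{e_H}$ and both are index-$p$ normal subgroups of $P$, one has $P = P_{e_H} K_{\Delta_x}$, and passing to the regular $P/K_{\Delta_x}$-action on $\Delta_x$ shows that $P_{e_H}$ is transitive on $\Delta_x$. As $P_{e_H} \le G_{e_H} \cap G_{\{\Delta_x\}}$, the orbit $x^{G_{e_H} \cap G_{\{\Delta_x\}}} = X \cap \Delta_x$ is all of $\Delta_x$, so $\Delta_x \subseteq X$. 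Varying $x$, $X$ is a union of right $H_p$-cosets, i.e., $XH_p = X$; applying the same argument to the basic set $X^{-1}$ (also disjoint from the inverse-closed subgroup $H(p)$) gives $H_p X = X$, whence $H_p \le \stab(X)$. The conceptually decisive step is recognizing that every point-stabilizer of $P$ within a block is normal in $P$ by the trivial index-$p$ argument; once that is in hand the rest is a routine orbit-stabilizer computation with the two index-$p$ normal subgroups $P_{e_H}$ and $K_{\Delta_x}$, so I do not expect any further serious obstacle.
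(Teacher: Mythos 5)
Your proof is correct and follows essentially the same route as the paper's: both arguments rest on the observation that the point-stabilizers of $O_p(G)$ are index-$p$ (hence normal, maximal) subgroups of a $p$-group, so that $P_{e_H}\ne P_x$ forces $P_{e_H}P_x=P$ and hence transitivity of $P_{e_H}$ on the block $H_px$. Your reformulation of $H(p)$ via $\hat{H}\cap N_G(P_{e_H})$ is just a more explicit version of the paper's observation that $x\sim y \iff O_p(G)_x=O_p(G)_y$ is a $G$-invariant equivalence relation whose class at $e_H$ is an $\A$-subgroup.
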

\begin{proof}
Write $N=O_p(G)$. 
By Lemma~\ref{lem:sylow}, $\hat{H}_p \le N$. 
Define the relation $\sim$ on $H$ as for $x ,y \in H$, let $x \sim y$ if and only 
if $N_x = N_y$. It is not difficult to show, using that $N \triangleleft G$, that $\sim$ is a 
$G$-invariant equivalence relation. 
This implies that the partition of $H$ induced by $\sim$ is a system of blocks (see \cite[Exercise~1.5.4]{DMbook}). As $H(p)$ is the block in the latter system 
containing the identity element $e_H$, $H(p)$ is an $\A$-subgroup. 

Now, let $X \in \cS(\A)$ such that $X \not\subset H(p)$. Then for $x \in X$, 
$N_{e_H} \ne N_x$. 
This implies that $N_{e_H}$ is transitive on the coset $H_px$, hence $H_px \subseteq X$. As this holds for any $x \in X$, we conclude that $H_p \le \stab(X)$. 
\end{proof}

We call a basic set $X$ of $\A$ an {\em atom} if $X=Lx$ for some subgroup 
$L \le H_n$ and element $x \in H \setminus H_n$. 
The set of all atoms of $\A$ will be denoted by $\atom(\A)$. 

\begin{lem}\label{lem:atom}
If $H_n$ is an $\A$-subgroup, then $\atom(\A) \ne \emptyset$. 
\end{lem}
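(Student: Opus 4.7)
The plan is to proceed by strong induction on $n$. In the base case $n=1$, $H_n=\{e_H\}$ and $H \setminus H_n = \{x_0\}$ for the unique involution $x_0$, so $\{x_0\} = H_1 x_0$ is an atom.

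For the inductive step $n > 1$, I would introduce the collection $\mathcal{B} := \{X \in \cS(\A) : X \subseteq H \setminus H_n\}$ and set
\[
L^* := \bigcap_{Y \in \mathcal{B}} \stab(Y) \le H_n,
\]
which is an $\A$-subgroup because intersections of $\A$-subgroups remain $\A$-subgroups. If $L^*=H_n$, every $Y \in \mathcal{B}$ satisfies $H_n Y = Y$, which forces $\mathcal{B} = \{H \setminus H_n\}$, and the unique basic set $H \setminus H_n = H_n x_0$ is then an atom. If $\{e_H\} < L^* < H_n$, I would descend to the quotient S-ring $\A_{H/L^*}$ over $H/L^* \cong D_{2n/|L^*|}$, which has $H_n/L^*$ as an $\A_{H/L^*}$-subgroup of index two and in which $n/|L^*|$ is odd, square-free, and strictly smaller than $n$. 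The inductive hypothesis then supplies an atom $\tilde{Z} = \tilde{M}\tilde{z} \in \atom(\A_{H/L^*})$; writing $\pi \colon H \to H/L^*$ for the canonical map, $M := \pi^{-1}(\tilde{M})$, and choosing any lift $z \in H \setminus H_n$ of $\tilde{z}$, the preimage $\pi^{-1}(\tilde{Z})$ equals the single $M$-coset $Mz$.

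The decisive structural point is that $L^* \le \stab(Y)$ for every $Y \in \mathcal{B}$ forces each such $Y$ to be a union of $L^*$-cosets; a routine verification then shows that $Y \mapsto Y/L^*$ is a bijection from $\mathcal{B}$ onto the basic sets of $\A_{H/L^*}$ contained in $H/L^* \setminus H_n/L^*$. Consequently the unique $X \in \mathcal{B}$ with $X/L^* = \tilde{Z}$ is forced to equal $Mz$, which provides the atom $X = Mz$ of $\A$.

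The main obstacle is the residual case $L^*=\{e_H\}$ with $n>1$, where quotienting by $L^*$ no longer reduces the degree. My tentative plan would be to substitute a different non-trivial proper $\A$-subgroup $K \le H_n$—such a $K$ exists as soon as some stabilizer $\stab(Y)$ is non-trivial—and to rerun the descent-and-lift through $\A_{H/K}$. The delicate issue is that the lifted $M$-coset may now split into several basic sets of $\A$, each projecting surjectively onto the atom of $\A_{H/K}$; ruling out this splitting appears to genuinely require both the standing solvability of $G$ and the transitivity-module form $\A = V(H, G_{e_H})$. In the complementary subcase where every $\stab(Y)$ is trivial, an atom exists precisely when some $Y \in \mathcal{B}$ is a singleton, and excluding the alternative (every $|Y| \ge 2$) will demand a separate structural argument.
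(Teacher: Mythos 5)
There is a genuine gap, and it sits exactly at the crux of the lemma. Your treatment of the cases $L^*=H_n$ and $\{e_H\}<L^*<H_n$ is sound and essentially reproduces the paper's handling of the situation where the setwise stabilizer $G^+$ of $H_n$ acts unfaithfully on $H_n$: the paper extracts a subgroup $H_m$ from the kernel of that action with $H_m\le\stab(Y)$ for every basic set $Y$ outside $H_n$ (so $H_m\le L^*$), and then either reads off $H\setminus H_n$ as an atom or descends to $\A_{H/H_m}$ and lifts, just as you do. But the residual case $L^*=\{e_H\}$ is not a corner case to be patched afterwards; it is the case where all the work happens, and your proposal only gestures at it (``my tentative plan would be\dots'', ``appears to genuinely require\dots'', ``will demand a separate structural argument''). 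Your two fallback ideas do not close it: if some $\stab(Y)\ne 1$ you propose quotienting by a nontrivial $\A$-subgroup $K\le H_n$ that no longer lies in every $\stab(Y)$, and then, as you yourself note, the preimage of an atom of $\A_{H/K}$ may split into several basic sets, none of which need be a coset; and in the subcase where every $\stab(Y)$ is trivial you correctly observe that an atom must be a singleton but give no argument that one exists.

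The paper's remark following the lemma shows why no purely combinatorial continuation can work: for the incidence graph of $\mathrm{PG}(2,q)$ one gets a transitivity module over $D_{2(q^2+q+1)}$ with $H_n$ an $\A$-subgroup and no atoms, so the missing case genuinely requires the solvability of $G$. Concretely, the paper resolves it by taking a prime $p$ with $O_p(G)\ne 1$, invoking Lemma~\ref{lem:sylow} and the subgroup $H(p)$ of Lemma~\ref{lem:H(p)}, and splitting on whether $|F(G)|$ is square-free: if not, $H(p)$ is a proper $\A$-subgroup to which one can descend; if so, one shows $F(G)=\hat{H}_n$, deduces that every subgroup of $H_n$ is an $\A$-subgroup, and then uses the coprimality $\gcd(p,|G_{e_H}|)=1$ (for $p$ the largest prime divisor of $n$) to rule out $H/H_m\setminus H_n/H_m$ being a basic set of $\A_{H/H_m}$, producing a proper dihedral $\A$-subgroup to which the induction applies. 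None of this machinery appears in your proposal, so the proof is incomplete precisely where the lemma is nontrivial.
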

\begin{proof}
Let $G^+$ be the setwise stabilizer of $H_n$ in $G$. 
Since $H_n$ is an $\A$-subgroup, $\{H_n,H \setminus H_n\}$ is a system of blocks for 
$G$, hence $G^+$ has index $2$ in $G$. 
Let $K$ be the kernel of the action of $G^+$ on $H_n$. 
Then $\orb(K,H \setminus H_n)$ form a non-trivial 
$G^+$-invariant partition. As $\hat{H}_n \le G^+$, this forces the existence of 
a subgroup $H_m \le H_n$ such that $\orb(K,H \setminus H_n)=
\orb(\hat{H}_m,H \setminus H_n)$. 
This implies the following property of basic sets outside $H_n$, which will be used a couple of times in the rest of the proof.  
\begin{equation}\label{eq:Cm}
\forall X \in \cS(\A),~X \not\subset H_n \implies H_m \le \stab(X).   
\end{equation} 

We prove the lemma using induction on $|\pi(n)|$. 

Assume first that $n$ is a prime. If $G^+$ is unfaithful on $H_n$, i.\,e., $K \ne 1$, then 
it follows from the property in~\eqref{eq:Cm} that $H \setminus H_n$ is an atom. 
Assume that $G^+$ is faithful on $H_n$. Then $G^+ \le \AGL(1,p)$, and one can quickly verify 
that every two faithful actions of $G^+$ of degree $p$ are  
equivalent. This implies that $G_{e_H}=(G^+)_{e_H}=(G^+)_x$ for some 
$x \in H \setminus H_n$ (see~\cite[Lemma~1.6B]{DMbook}), which further yields that 
$\{x\}$ is an atom of $\A$.

Now, assume that $n$ is a composite number and the lemma is true when $n$ is replaced with any odd square-free number larger than $1$ and having less number of prime divisors than $n$. 
If $G^+$ is unfaithful on $H_n$, then $H_m \ne 1$ in~\eqref{eq:Cm}. 
If $m=n$, then $H \setminus H_n$ is an atom. 
If $m < n$, then consider the S-ring $\A_{H/H_m}$. It is a transitivity module induced by a solvable group and $H_n/H_m$ is an $\A_{H/H_m}$-subgroup, whence it possesses an atom due to the induction hypothesis. It follows from \eqref{eq:Cm} that this atom lifts to 
an atom of $\A$. For the rest of the proof we assume that $G^+$ is faithful on $H_n$. 

As $G$ is solvable, there is a prime divisor $p$ of $|G|$ such that $O_p(G) \ne 1$. 
The $O_p(G)$-orbits form a system of blocks for $G$ whose blocks are the same as 
the $\hat{B}$-orbits for 
some subgroup $B < H$ such that $|B|=p$. If $p=2$, then $B=\langle x\rangle$ for 
some element $x \in H \setminus H_n$. 
It follows that $\{x\}$ is an atom of $\A$. 
We may assume that $O_2(G)=1$. Then $F(G)=\prod_{p \in \pi(n)}O_p(G)$, where $F(G)$ is the Fitting subgroup of $G$. We distinguish two cases according to whether 
$|F(G)|$ is square-free or not.  

\begin{case1}
$|F(G)|$ is not square-free. 
\end{case1}
There exists a prime $p \in \pi(n)$ such that $p^2$ divides $|F(G)|$. 
Consider the group $H(p)$ defined in Lemma~\ref{lem:H(p)}. 
The condition that $|O_p(G)| > p$ forces that $H(p) < G$, since otherwise the action of $O_p(G)$ is semiregular which would imply $O_p(G)=p$. 
Therefore, if $H(p)$ is a dihedral subgroup, then the induction hypothesis yields that $\A_{H(p)}$ has an atom and we are done. If $H(p) \le H_n$, then it follows from Lemma~\ref{lem:H(p)} that every atom of $\A_{H/H_p}$ 
lifts to an atom of $\A$, and we are done in this case as well.

\begin{case2}
$|F(G)|$ is square-free. 
\end{case2}
Choose a prime divisor $p$ of $|F(G)|$. Then $|O_p(G)|=p$, and it follows from Lemma~\ref{lem:sylow} that  
$\hat{H}_p=O_p(G) \le \hat{H}_n$. Thus $F(G) \le \hat{H}_n$. On the other hand, as 
$C_G(F(G)) \le F(G)$ (see \cite[Theorem~6.1.3]{Gbook}), we can 
write $\hat{H}_n \le C_G(F(G)) \le F(G) \le \hat{H}_n$. It follows that 
$F(G)=\hat{H}_n$, in particular, $\hat{H}_n \triangleleft G$. In particular we obtain that $F(G)$ is the product of subgroups of prime order and all of these subgroups are $\mathcal{A}$-subgroups. This immediately implies that every subgroup of $H_n$ is an $\mathcal{A}$-subgroup.

Let $p$ be the largest prime divisor of $n$ and let $n=pm$. 
Then $H_m$ is an $\A$-subgroup, and $\A_{H/H_m}$ is an S-ring over 
$H/H_m$, which is a dihedral group of order $2p$. 
If $H/H_m \setminus H_n/H_m$ is a basic set of $\A_{H/H_m}$, then 
every basic set of $\A$ outside $H_n$ intersects any two $H_m$-cosets outside 
$H_n$ at the mumber of elements 
see~\cite[p.~21]{EP14}.
in particular, it has size divisible by $p$. 
On the other hand, since $G^+$ is faithful on $H_n$ and 
$\hat{H}_n \triangleleft G$, it follows that  $|G_{e_H}|$ divides 
$\phi(n)$, where $\phi$ denotes the Euler's totient function, and hence 
$\gcd(p,|G_{e_H}|)=1$. This contradicts the fact that $\A$ has basic sets 
of size divisible by $p$. We obtain that $\A_{H/H_m}$ has a basic set outside 
$H/H_m \setminus H_n/H_m$ of 
size $1$. Equivalently, $H$ contains a dihedral $\A$-subgroup $B$ of order $2m$. 
Applying the induction hypothesis to $\A_{B}$, we see that $\A_{B}$ possesses an atom, and hence so does $\A$. 
\end{proof}

\begin{rem}
It is worth noting that the above lemma does not hold if $G$ is not a solvable group. 
A counter example arises from the incidence graph of the Desarguesian projective plane 
$\mathrm{PG}(2,q)$, where $q$ is a prime power. Denote this graph by $\Gamma(q)$ and let 
$A=\aut(\Gamma(q))$ (i.\,e., $\Gamma(q)$ is the bipartite graph, whose biparts represent the points and the lines, resp., and the edges are defined according to the incidence relation of the plane). It is known that $A$ contains a regular dihedral group $D$ of order $2(q^2+q+1)$. 
Identifying the vertex set of $\Gamma(q)$ with $D$ and letting 
$\B=V(D,A_{e_D})$, we obtain that $C$ is a $\B$-subgroup, where 
$C$ denotes the cyclic subgroup of $D$ of order $q^2+q+1$, and 
$D \setminus C$ splits into two basic sets of sizes 
$q+1$ and $q^2$, resp., implying that $\B$ has no atoms. 
\end{rem}

We say that a dihedral $\A$-subgroup $B \le H$ is a \emph{minimal dihedral $\A$-subgroup} if it contains no dihedral $\A$-subgroup of $H$ apart from itself. If $H_n$ is an $\A$-subgroup, then it follows from Lemma~\ref{lem:atom} that there is a
one-to-one correspondence between the atoms of $\A$ and the minimal dihedral 
$\A$-subgroups of $H$. Namely, if $X$ is an atom, then $\langle X \rangle$ is a minimal dihedral $\A$-subgroup; and conversely, if $B$ is a minimal dihedral $\A$-subgroup, then 
$B \setminus H_n$ is an atom. 
\medskip

From now on, and throughout the remainder of the section, we assume that $G \in \Sup^{\min}(\hat{H})$, or 
equivalently, $G$ is a $\prec$-minimal $d$-group.
Then, by Proposition~\ref{prop:Hn-is-block}, $H_n$ is an $\A$-subgroup, 
hence $\atom(\A) \ne \emptyset$ due to due to Lemma~\ref{lem:atom}. 
In view of Theorem~\ref{thm:solvable}, Theorem~\ref{thm:main-v2} follows if we show that 
$\A$ is a CI-S-ring. 
\medskip

In the following lemma, we refine the statements in Lemmas~\ref{lem:sylow} and \ref{lem:H(p)}. 

\begin{lem}\label{lem:mini}
Let $\delta$ be a system of blocks for $G$ with block size $p$ for a prime 
$p \in \pi(n)$. If $G$ is $\prec$-minimal, then the following conditions hold.
\begin{enumerate}[(i)]
\item $O_p(G)$ is the Sylow $p$-subgroup of $G$.
\item  Let $H(p)$ be the $\A$-subgroup defined in Lemma~\ref{lem:H(p)}. Then 
 \[
\forall X \in \cS(A),~X \not\subset H(p) \iff H_p \le \stab(X).
\]
\end{enumerate}
\end{lem}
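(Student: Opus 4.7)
The plan is to prove both parts separately: part (i) by constructing a proper $\hat H$-complete subgroup of $G$ and invoking $\prec_{\hat H}$-minimality, and part (ii) by combining (i) with a stabilizer-conjugation argument along $H_p$-cosets.

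For part (i), since $G$ is solvable by Theorem~\ref{thm:solvable}, Hall's theorem produces a Hall $p'$-subgroup $H^* \le \hat H$ and a Hall $p'$-subgroup $M \le G$ with $H^* \le M$. Setting $L := O_p(G) \cdot M$, which is a subgroup of $G$ because $O_p(G) \triangleleft G$, one has $\hat H = \hat H_p H^* \le O_p(G) \cdot M = L$ using $\hat H_p \le O_p(G)$ from Lemma~\ref{lem:sylow}. The crucial step is $L \prec_{\hat H} G$. Given a regular $E \cong D_{2n} \le G$, the Sylow $p$-subgroup $E_p$ is normal in $E$, so its orbits on $\delta$ all share a common size dividing $p$; since $|\delta| = 2n/p$ is coprime to $p$, this common size is $1$, whence $E_p \le G_\delta$. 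Because $O_p(G)$ is normal in $G$ and is a Sylow $p$-subgroup of $G_\delta$ by Lemma~\ref{lem:sylow}, it is the unique such, so $E_p \le O_p(G)$. A Hall $p'$-complement of $E$ can be conjugated into $M$ by Hall's theorem, so $E^{b^{-1}} \le L$ for some $b \in G$. The $\prec_{\hat H}$-minimality of $G$ now forces $L = G$, and consequently $|O_p(G)| = |G|_p$.

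For part (ii), the forward direction is Lemma~\ref{lem:H(p)}. For the converse, I would assume $H_p \le \stab(X)$ for some $X \in \cS(\A)$ and, toward contradiction, that $X \subseteq H(p)$. Then $X$ is a union of $H_p$-cosets, so $p \mid |X|$, and as $X$ is a $G_{e_H}$-orbit, $|X|$ divides $|G_{e_H}|$, giving $p \mid |G_{e_H}|$. The plan is to contradict this by establishing $|O_p(G)| = p$: indeed, by (i) $O_p(G)$ would then be the unique (normal) Sylow $p$-subgroup of $G$, so any $p$-element of $G_{e_H}$ would lie in $O_p(G) \cap G_{e_H} = O_p(G)_{e_H} =: K$, which has order $|O_p(G)|/p = 1$. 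To extract $|O_p(G)| = p$, fix $x \in X$ and $h \in H_p$ and set $h'' := x^{-1} h x \in H_p$; right translation by $\hat{h''} \in \hat H_p \le O_p(G)$ sends $x$ to $hx$, so the stabilizer $O_p(G)_{hx}$ is conjugate to $K$ by $\hat{h''}$. Since $hx \in X \subseteq H(p)$ forces $O_p(G)_{hx} = K$, one concludes $\hat{h''} \in N_{O_p(G)}(K)$; as $h$ ranges over $H_p$, this gives $\hat H_p \le N_{O_p(G)}(K)$. The semi-regularity of $\hat H$ yields $\hat H_p \cap K = 1$, whence $|\hat H_p|\cdot|K| = |O_p(G)|$ and $\hat H_p \cdot K = O_p(G)$ as a set; since $\hat H_p$ normalizes $K$ and $K$ normalizes itself, $K \triangleleft O_p(G)$. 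The faithful action of $O_p(G)$ on $H$ then forces $K = 1$, completing the contradiction.

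The main obstacle I anticipate is keeping the right-translation bookkeeping straight in (ii): the element of $\hat H_p$ carrying $x$ to $hx$ is $\hat{h''}$ with $h'' = x^{-1} h x$, not $\hat h$, because $\hat H_p$ acts on the right while the $H_p$-stability of $X$ is a left-multiplication condition. Once this identification is secured, the normalizer identity $\hat H_p \le N_{O_p(G)}(K)$ and the passage to $K = 1$ follow mechanically. Part (i), by contrast, follows a familiar template: use solvability to produce a Hall $p'$-complement, combine it with $O_p(G)$, and appeal to $\prec_{\hat H}$-minimality.
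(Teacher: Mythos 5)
Your part~(i) is correct and is a legitimate alternative to the paper's argument: the paper instead observes that $G^\delta$ is again $\prec$-minimal of degree $2n/p$, so $p\nmid|G^\delta|$ and a Sylow $p$-subgroup of $G$ lies in $G_\delta$, where Lemma~\ref{lem:sylow} identifies it with $O_p(G)$. Your explicit construction of the $\hat H$-complete subgroup $L=O_p(G)M$ achieves the same end, and each step (equal orbit sizes of the normal subgroup $E_p$ on $\delta$, uniqueness of the normal Sylow $p$-subgroup of $G_\delta$, Hall conjugacy of the $p'$-part of $E$) checks out.

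Part~(ii), however, has a genuine gap at its last step. You correctly reduce the converse implication to showing $K:=O_p(G)_{e_H}=1$, and you correctly prove $\hat H_p\le N_{O_p(G)}(K)$ and hence $K\triangleleft O_p(G)=\hat H_p K$. But the claim that ``$K\triangleleft O_p(G)$ together with faithfulness forces $K=1$'' is false: normality of the point stabilizer $K$ in $O_p(G)$ only implies that $K$ fixes pointwise the single $O_p(G)$-orbit $e_H^{O_p(G)}=H_p$; it says nothing about how $K$ acts on the other $O_p(G)$-orbits (the remaining blocks of $\delta$), so faithfulness of $O_p(G)$ on all of $H$ does not yield $K=1$. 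Worse, the normality you derive carries no information at all: as the proof of Lemma~\ref{lem:sylow} shows, $O_p(G)$ embeds in $(\hat H_p)^{\Delta_0^*}\cdots(\hat H_p)^{\Delta_{m-1}^*}$ and is therefore elementary abelian, so every subgroup of $O_p(G)$ is normal, and $K\ne 1$ precisely when $H(p)\ne H$ --- which is exactly the situation part~(ii) is concerned with. The paper's argument avoids this entirely: with $x\in X$, orbit--stabilizer gives $|G_{e_H}|=|G_{e_H,x}|\cdot|X|$ with $p\mid|X|$, so the Sylow $p$-subgroup of $G_{e_H,x}$, which equals $P_{e_H}\cap P_x$ for the normal Sylow subgroup $P=O_p(G)$ of $G$ supplied by part~(i), is properly contained in $P_{e_H}$; but $x\in H(p)$ means $P_x=P_{e_H}$, a contradiction. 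You should replace your final step by this two-point-stabilizer Sylow comparison.
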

\begin{proof}
(i): Let $P$ be a Sylow $p$-subgroup of $G$. 
In view of Lemma~\ref{lem:sylow}, it is sufficient to show that $P < G_\delta$. 
This is clearly the case if $n=p$, hence assume that $n > p$.
It follows from $\prec$-minimality and Hall's theorem (see \cite[Theorem~6.1.4]{Gbook}) 
that the odd prime divisors of $|G|$ are exactly those in 
$\pi(n)$. Consider the image $G^\delta$ of $G$ induced by its action on $\delta$. 
By Lemma~\ref{lem:misha2}(ii), $G^\delta$ is also $\prec$-minimal, and thus 
$p$ does not divide $|G^\delta|$, hence $P \le G_\delta$, as required.

 (ii): Fix an arbitrary basic set $X \in \cS(\A)$. In view of Lemma~\ref{lem:H(p)}, it is sufficient to show that  
 $H_p \not\le \stab(X)$ provided that $X \subset H(p)$.  
Assume on the contrary that $X \subset H(p)$ and $H_p \le \stab(X)$.  
As $X$ is a $G_{e_H}$-orbit, the orbit-stabiliser lemma yields that 
$|G_{e_H}|=|G_{e_H,x}|\cdot |X|$, where $x \in X$. 
As $H_p \le \stab(X)$, $p$ divides $|X|$ and hence the Sylow $p$-subgroup of $G_{e_H,x}$ is properly contained in 
the Sylow $p$-subgroup of $G_{e_H}$, by which, $P_{e_H,x}=P_{e_H} \cap P_{x} < P_{e_H}$. 
However, as $x \in H(p)$, $P_{e_H}=O_p(G)_{e_H}=O_p(G)_x=P_x$, a contradiction. 
\end{proof}

Next, we discuss some elementary properties of the atoms of $\A$. 
Let $X \in \atom(\A)$. 
It follows directly from the definition that $X=\stab(X)x$ for some $x \in H \setminus H_n$.
Thus, for any $c ,d \in H_n$,
\begin{equation}\label{eq:stabX}
Xc=Xd \iff cd^{-1} \in \stab(X).    
\end{equation}

Suppose that $H_p$ is an $\A$-subgroup for some $p \in \pi(n)$ and $n \ne p$. 
Then 
\begin{equation}\label{eq:quo}
X/H_p \in \atom(\A_{H/H_p})~\text{and}~\stab(X/H_p)=H_p\stab(X)/H_p.
\end{equation}

Let $\varphi \in \iso_1(\A)$ be a normalised isomorphism. By definition, 
$X=Lx$, where $L = \ stab(X) \le H_n$ and $x \in H \setminus H_n$. 
It follows from Proposition~\ref{prop:iso}(i)--(ii) that $L^\varphi=L$ since the image of an $\mathcal{A}$-subgroup is an $\mathcal{A}$-subgroup and there is just one subgroup of $H$ of order $|L|$. Thus we have
$X^\varphi=L^\varphi x^\varphi=L x^\varphi$.
From this, define $c=x^{-1}x^\varphi$, so $X^\varphi=Lxc=Xc$. Note that, since $\varphi$ maps 
$H \setminus H_n$ to 
itself, it follows that $c \in H_n$.
In the next lemma we show that the element $c$ depends only on the isomorphism 
$\varphi$, but not on 
the particular atom $X$, provided that $\varphi$ fixes setwise each basic set contained in $H_n$. For this purpose, we introduce some further notation. 

For any $c \in H_n$, there is an automorphism of $H$ that fixes pointwise $H_n$ and acts on 
$H \setminus H_n$ as $\hat{c}$. We denote this automorphism by $\sigma_c$.  
For $p \in \pi(n)$ and $x \in H_n$, let $x_p$ and $x_{p'}$ denote the projections of $x$ onto the $p$-component $H_p$ and
its complement $H_{n/p}$, resp., i.\,e., $x_p \in H_p$, $x_{p'} \in H_{n/p}$, and 
$x=x_px_{p'}$.  

\begin{lem}\label{lem:f-c}
Suppose that $G$ is $\prec$-minimal and let $\varphi \in \iso_1(\A)$ be an isomorphism, which fixes setwise each basic set of $\A$ contained in $H_n$. 
Then there exists an element $c \in H_n$ such that 
\begin{equation}\label{eq:Xc}
\forall X \in \atom(\A),~X^\varphi=Xc.
\end{equation}
\end{lem}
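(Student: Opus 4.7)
The plan is to construct the desired $c$ by assembling it one prime at a time from the local data $c_X$, using the CRT decomposition $H_n \cong \prod_{p \in \pi(n)} H_p$. The paragraph immediately preceding the lemma already records that, for each atom $X = Lx$ with $L = \stab(X) \le H_n$ and $x \in H \setminus H_n$, the image $X^\varphi = X c_X$ for the element $c_X := x^{-1}x^\varphi \in H_n$, and by Eq.~\eqref{eq:stabX} this $c_X$ is determined only modulo $L$. The task therefore reduces to showing that the residues $c_X \bmod L$ come from a common $c\in H_n$ as $X$ ranges over $\atom(\A)$.

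The first substantive step would be to establish the pairwise compatibility
\[
c_X^{-1}c_Y \in LM \quad\text{for any two atoms}\ X = Lx,\ Y = My.
\]
Since $H_n$ is abelian and $x$ conjugates $H_n$ by inversion, one has $xmy = m^{-1}xy$, which yields the key identity
\[
\underline{X}\cdot\underline{Y} \;=\; |L\cap M|\cdot\underline{LM\cdot xy} \quad\text{in } \Z H.
\]
By hypothesis, every basic set of $\A$ contained in $H_n$ is fixed setwise by $\varphi$, and $LM\cdot xy \subseteq H_n$ is an $\A$-set, so it decomposes as a disjoint union of such basic sets; hence $(\underline{LM\cdot xy})^\varphi = \underline{LM\cdot xy}$. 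On the other hand, using Eq.~\eqref{eq:XiXj} extended linearly and the fact that every element of $Y$ conjugates $H_n$ by inversion (so $c_X\,\underline Y = \underline Y\, c_X^{-1}$),
\[
(\underline{X}\,\underline{Y})^\varphi \;=\; \underline{X}^\varphi\,\underline{Y}^\varphi \;=\; \underline{X}\,c_X\,\underline{Y}\,c_Y \;=\; \underline{X}\,\underline{Y}\,c_X^{-1}c_Y.
\]
Equating the two expressions and cancelling $|L\cap M|$ yields $LM\cdot xy = LM\cdot xy\cdot c_X^{-1}c_Y$, and since the right $H_n$-stabiliser of $LM\cdot xy$ is $LM$ (because $H_n$ is abelian), one concludes $c_X^{-1}c_Y \in LM$.

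Next I would assemble $c$ via CRT. Since $n$ is odd and square-free, $H_n \cong \prod_{p \in \pi(n)} H_p$ and every $L \le H_n$ has the form $L = \prod_{p \in J(L)} H_p$ for some $J(L)\subseteq \pi(n)$. Writing $a = \prod_p a^{(p)}$ for the CRT decomposition of $a\in H_n$, the projection of the above pairwise constraint to $H_p$ reads $c_X^{(p)}(c_Y^{(p)})^{-1} \in L_p M_p$; when neither $L$ nor $M$ contains $H_p$, both $L_p$ and $M_p$ are trivial, forcing $c_X^{(p)} = c_Y^{(p)}$. Accordingly, define
\[
c^{(p)} := \begin{cases} c_X^{(p)} & \text{if some atom } X = Lx \text{ satisfies } H_p \not\le L,\\ e_H & \text{otherwise,}\end{cases}
\]
which is unambiguous, and set $c := \prod_p c^{(p)}$. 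To verify $c \equiv c_X \pmod L$ for every atom $X = Lx$ it suffices to check $c^{(p)} \equiv c_X^{(p)} \pmod{L_p}$ for each $p\in\pi(n)$: this is trivial when $H_p \le L$ (then $L_p = H_p$) and holds by construction when $H_p \not\le L$ (then $L_p = 1$). Hence $X c_X = X c$, so $X^\varphi = X c$.

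The main obstacle will be the computation in the second paragraph: translating the combinatorial-isomorphism property of $\varphi$ into the ring identity in $\Z H$, producing the twist $c_X\,\underline{Y} = \underline{Y}\,c_X^{-1}$ from the dihedral conjugation, and recognising that the hypothesis on $\varphi$ is precisely what is needed to fix $LM\cdot xy$ setwise. Once the inclusion $c_X^{-1}c_Y \in LM$ is in hand, the CRT assembly in the third paragraph uses nothing beyond the square-free structure of $n$.
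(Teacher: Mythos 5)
Your proof is correct, but it takes a genuinely different route from the paper. The paper argues by induction on $|\pi(n)|$: it passes to the quotient S-ring $\A_{H/H_p}$ for a minimal $\A$-subgroup $H_p\le\stab(Lb)$, invokes the induction hypothesis there (which requires knowing that $G^\delta$ is again $\prec$-minimal), and then uses the dichotomy of Lemma~\ref{lem:mini}(ii) on the subgroup $H(p)$ — with a second pass through a prime $q$ when $H(p)$ is dihedral — to pin down a global $c$. Your argument instead extracts the pairwise congruence $c_X^{-1}c_Y\in\stab(X)\stab(Y)$ directly from the multiplication rule \eqref{eq:XiXj} applied to two atoms, using that $XY\subseteq H_n$ is an $\A$-set fixed setwise by $\varphi$ while $X^\varphi Y^\varphi=XY\,c_X^{-1}c_Y$ (the dihedral twist $c_X\underline{Y}=\underline{Y}c_X^{-1}$ is exactly right), and then glues the local residues by CRT using that every subgroup of the cyclic group $H_n$ of square-free order is a product of the $H_p$ it contains. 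I checked the computation $X^\varphi Y^\varphi = LMxy\,c_X^{-1}c_Y$ and the coset argument giving $c_X^{-1}c_Y\in LM$; both are sound, and the well-definedness of $c^{(p)}$ and the final congruence $c\equiv c_X\pmod{\stab(X)}$ follow as you say. Two remarks. First, you do not actually need the group-ring identity with the multiplicity $|L\cap M|$: the set-level version of \eqref{eq:XiXj} already gives $LMxy=LMxy\,c_X^{-1}c_Y$, which avoids any discussion of whether $\varphi$ preserves structure constants. Second, your proof never uses the $\prec$-minimality of $G$ (only the standing fact that $H_n$ is an $\A$-subgroup, so that $\varphi$ preserves $H\setminus H_n$ and each $\stab(X)$), nor Lemma~\ref{lem:mini}(ii), nor induction; it therefore proves the lemma under strictly weaker hypotheses and is, in my view, shorter and more transparent than the argument in the paper.
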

\begin{proof}
We proceed by induction on $|\pi(n)|$. 
Assume first that $n$ is a prime. 
Eq.~\eqref{eq:Xc} holds trivially if $H \setminus H_n$ is a basic set. 
Otherwise, $\A$ has an atom in the form $\{x\},\,  x \in H \setminus H_n$. 
Let $\psi=\varphi\sigma_{c}^{-1}$, where $c$ is defined as $c=x^{-1}x^\varphi$. 
It follows that $\psi \in \iso_1(\A)$ and $x^\psi=x^{\varphi\sigma_c^{-1}}=x^\varphi c^{-1}=x$.  
If $Y$ is an arbitrary basic set contained in $H_n$, then $Y^\psi=Y^\varphi=Y$. 
Let $X$ be an arbitrary atom of $\A$. Then for $Y:=xX$, $Y \in \cS(\A)$ such that $Y \subset H_n$. Therefore, $X^\psi=(xY)^\psi=x^\psi Y^\psi=xY=X$. This yields $X^\varphi=X^{\psi\sigma_c}=Xc$, i.\,e., Eq.~\eqref{eq:Xc} holds. 

Now, assume that $n$ is a composite number and the lemma is true when  
$n$ is replaced with any odd square-free number larger than $1$ 
and having less number of prime divisors than $n$. 
As discussed above, for every $X \in \atom(\A)$, there exists an element $c_X \in H_n$ such that $X^\varphi=Xc_X$. We aim to show that $c_X$ can be chosen to be a constant.

If there is an atom of $\A$ of size $1$, then the argument used in the prime case above can be 
copied. In the rest of the proof we assume that each atom of $\A$ has size larger than $1$. 
In view of this and Lemma~\ref{lem:atom}, there is an atom $Lb$ such that $1 < L \le H_n$ and $b \in H \setminus H_n$. 
Let $H_p$ be a minimal $\A$-subgroup contained in $\stab(Lb)=L$, and $H(p)$ be the 
$\A$-subgroup defined in Lemma~\ref{lem:H(p)}. 
Let $\varphi^*=\varphi^{H/H_p}$, the isomorphism of 
the S-ring $\A_{H/H_p}$ defined in Proposition~\ref{prop:iso}(iii). 
Clearly, $\varphi^*$ fixes setwise the basic sets of $\A_{H/H_p}$ contained in $H_n/H_p$, and 
if $X$ is an atom of $\A$, then $X/H_p$ is an atom $\A_{H/H_p}$, see~\eqref{eq:quo}. Then, we have that 
\[
(X/H_p)^{\varphi^*}=(X^\varphi)/H_p=(Xc_X)/H_p=(X/H_p)\;  H_p(c_X)_{p'},
\]
where $H_p(c_X)_{p'}$ is regarded as an element in the quotient group $H_n/H_p$. 
The S-ring $\A_{H/H_p}$ is equal to the transitivity module 
$V(H/H_p,(G^\delta)_{e_{H/H_p}})$, where $\delta$ is the system of blocks consisting of the 
$\hat{H}_p$-orbits. In view of the fact that $G^\delta$ is $\prec$-minimal,  the induction hypothesis can be applied to $\A_{H/H_p}$.  
As a result, we can choose an element $d \in H_{n/p}$ such that   
\[
(X/H_p)\; H_p(c_X)_{p'}=(X/H_p)\; H_p d.
\]
Using the equivalence in~\eqref{eq:stabX}, we get the condition 
$H_p (c_X)_{p'}d^{-1} \in \stab(X/H_p)$. As $\stab(X/H_p)=H_p\stab(X)/H_p$, see \eqref{eq:quo}, this implies that 
the coset $H_p(c_X)_{p'}d^{-1}$ is contained in the subgroup $H_p\stab(X)$, whence 
\[
c_X\, ((c_X)_pd)^{-1}=(c_X)_{p'}d^{-1} \in \stab(X) \cap H_{n/p},
\]
and therefore, $Xc_X=X(c_X)_pd$, see~\eqref{eq:stabX}. 
Now, as $d$ does not depend on the particular atom $X$, the following identity holds.  
\begin{equation}\label{eq:XcXpd}
\forall X \in \atom(\A),~X^\varphi=X(c_X)_pd.
\end{equation}

Now, if $X \in \atom(\A)$ such that $X \not\subset H(p)$, then 
$H_p \le \stab(X)$ due to Lemma~\ref{lem:mini}(ii), hence we obtain that 
$X^\varphi=Xd$. 
In the case where $H(p) \le H_n$, this shows that 
Eq.~\eqref{eq:Xc} holds for $c=d$. 

Thus we may assume that $H(p)$ is a dihedral subgroup. 
Let $X' \in \atom(\A)$ such that $X' \subset H(p)$. 
We have seen that $\stab(X') \ne 1$, and there is a minimal 
$\A$-subgroup $H_q$ contained in $\stab(X')$. By Lemma~\ref{lem:mini}(ii), $q  \ne p$. 
Now, one can copy the previous argument after switching $c_X$ with $(c_X)_pd$, where $X$ runs over the set 
$\atom(\A)$, and also replacing $p$ with $q$. This leads to the existence of an element 
$f \in H_{n/q}$ such that the identity in~\eqref{eq:XcXpd} becomes the following one:
\[ 
\forall X \in \atom(\A),~X^\varphi=X((c_X)_pd)_q f=Xd_qf.
\] 
We conclude that Eq.~\eqref{eq:Xc} holds for $c=d_qf$.  
\end{proof}

We are one step away from showing  
that $\A$ is a CI-S-ring, and hence deriving Theorem~\ref{thm:main-v2}. 
Let $\varphi \in \iso_1(\A)$ be an 
arbitrary isomorphism. By Eq.~\eqref{eq:CI-S}, $\A$ is a CI-S-ring if and only if 
there exists an automorphism $\alpha \in \aut(H)$ such that 

\begin{equation}\label{eq:CI}
\varphi\alpha \in \aut(\A).
\end{equation}

As $\A_{H_n}$ is a CI-S-ring (see~\cite{KMPRS}), there exists an automorphism $\beta \in \aut(H)$ such that $X^\varphi=X^\beta$ for each basic set $X \in \cS(\A)$,  
$X \subset H_n$. Then, due to Lemma~\ref{lem:f-c}, there exists $c \in H_n$ such that $X^{\varphi\beta^{-1}}=Xc$ for each atom $X \in \atom(\A)$. Consequently, $\varphi\beta^{-1}\sigma_c^{-1}$ 
is an isomorphism in $\iso_1(\A)$, which fixes setwise each basic set of $\A$ contained in $H_n$ and each atom of $\A$.  
We show that the statement in~\eqref{eq:CI} holds for $\alpha=\beta^{-1}\sigma_c^{-1}$ 
by proving the following lemma.

\begin{lem}\label{lem:f-fix}
Suppose that $G$ is $\prec$-minimal and $\varphi \in \iso_1(\A)$ is an isomorphism, which fixes setwise each basic set of $\A$ contained in $H_n$ and each atom of $\A$. Then 
$\varphi \in \aut(\A)$, or equivalently,  
\begin{equation}\label{eq:goal}
\forall X \in \cS(\A),~X^\varphi=X.
\end{equation}
\end{lem}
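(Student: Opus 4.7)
The proof proceeds by induction on $|\pi(n)|$, combining a multiplicative cancellation argument with a reduction to quotient S-rings. The central identity is this: for any atom $A \in \atom(\A)$ (which $\varphi$ fixes setwise by hypothesis) and any basic set $X \subset H \setminus H_n$, the product $\underline{A} \cdot \underline{X} \in \Z H$ has support in $H_n$ and so decomposes as a non-negative integer combination of basic sets of $\A$ contained in $H_n$. Since each of these is fixed setwise by $\varphi$, the product $\underline{A} \cdot \underline{X}$ is $\varphi$-invariant, and the multiplicativity of $\varphi$ on $\A$ (Eq.~\eqref{eq:XiXj}) yields $\underline{A} \cdot \underline{X^\varphi} = \underline{A} \cdot \underline{X}$, i.e.\ $\underline{A} \cdot (\underline{X^\varphi} - \underline{X}) = 0$ in $\Z H$.

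My first move is to dispose of the situation in which a singleton basic set $\{b\} \subset H \setminus H_n$ exists; such a $\{b\}$ is automatically an atom and hence fixed by $\varphi$. Applying the identity with $A = \{b\}$ and noting that $\underline{\{b\}}$ is a unit in $\Z H$ with $\underline{\{b\}}^{-1} = \underline{\{b\}}$ (because $b^2 = e_H$), I multiply through to conclude $\underline{X^\varphi} = \underline{X}$, i.e.\ $X^\varphi = X$. This handles the base case $n = p$ prime, for which Lemma~\ref{lem:atom} guarantees either that $H \setminus H_n$ is itself an atom (hence fixed) or that a singleton basic set in $H \setminus H_n$ exists; it also settles every instance of the induction step in which a singleton basic set is present in $H \setminus H_n$.

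For the remaining part of the induction step ($n$ composite, no singleton basic set in $H \setminus H_n$), every atom $Lb$ of $\A$ satisfies $|L| > 1$, and I would adapt the quotient strategy of the composite case of Lemma~\ref{lem:f-c}. Pick a minimal $\A$-subgroup $H_p$ contained in the stabilizer of some atom (available because atoms exist by Lemma~\ref{lem:atom}), and consider the induced normalised isomorphism $\varphi^* = \varphi^{H/H_p}$ of $\A_{H/H_p}$ (Proposition~\ref{prop:iso}(iii)). Since $\A_{H/H_p}$ is the transitivity module over the solvable $\prec$-minimal group $G^\delta$ (where $\delta$ is the block system formed by the $H_p$-cosets), the induction hypothesis will apply to $\A_{H/H_p}$ once I verify that $\varphi^*$ satisfies the lemma's hypotheses for $\A_{H/H_p}$: basic sets in $H_n/H_p$ are fixed because they are images of $\varphi$-fixed basic sets of $\A$ in $H_n$, and atoms of $\A_{H/H_p}$ are fixed by combining the hypothesis that $\varphi$ fixes atoms of $\A$ with Proposition~\ref{prop:iso}(ii), which gives $(Lb)^\varphi = L b^\varphi$ for every $\A$-subgroup $L \le H_n$ (noting that $L^\varphi = L$ since any subgroup of odd order in $H$ lies in $H_n$, which is cyclic of square-free order, so there is a unique such subgroup of each admissible order). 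Induction then delivers $(X/H_p)^{\varphi^*} = X/H_p$ for every basic set $X$ of $\A$ with $H_p \le \stab(X)$, and lifting back yields $X^\varphi = X$ in these cases. The remaining basic sets $X$ (those with $H_p \not\le \stab(X)$ for the chosen $H_p$) are handled by iterating the argument over different minimal $\A$-subgroups $H_q$, appealing to Lemma~\ref{lem:mini}(ii) to force $H_p \le \stab(X)$ under appropriate structural hypotheses on $H(p)$.

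The principal obstacle is the verification that $\varphi^*$ fixes every atom of $\A_{H/H_p}$: atoms of $\A_{H/H_p}$ need not be images of atoms of $\A$, because a preimage coset $Lb$ (with $H_p \le L$) whose projection $Lb/H_p$ is an atom of $\A_{H/H_p}$ may itself decompose in $\A$ as a union of several non-atom basic sets. Careful bookkeeping of how $\varphi$ permutes these sub-basic-sets within $Lb$, using Proposition~\ref{prop:iso}(ii)'s identity $(Lb)^\varphi = L b^\varphi$ together with the hypothesis that $\varphi$ fixes atoms of $\A$, is required to show that $\varphi$ preserves the coset structure modulo $H_p$ and hence that the projection $Lb/H_p$ is fixed by $\varphi^*$.
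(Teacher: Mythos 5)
Your outline follows the paper's proof closely: induction on $|\pi(n)|$, separate treatment of the case where $\A$ has a singleton basic set outside $H_n$ (your cancellation argument with the unit $\underline{\{b\}}$ is a clean way to make precise what the paper dismisses as ``easy to see''), passage to $\A_{H/H_p}$ for minimal $\A$-subgroups $H_p$, and Lemma~\ref{lem:mini}(ii) together with the subgroups $H(p)$ to finish. (Your endgame is vaguer than the paper's, which shows concretely that $\bigcap_{p}H(p)$, taken over the primes $p$ for which $H_p$ is an $\A$-subgroup, lies inside $H_n$: an atom contained in that intersection would have some minimal $\A$-subgroup $H_q$ in its stabilizer and would then lie outside $H(q)$ by Lemma~\ref{lem:mini}(ii), a contradiction.)

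The genuine gap is exactly where you place it: the verification that $\varphi^{*}$ fixes every atom of $\A_{H/H_p}$ is the heart of the induction step, and ``careful bookkeeping is required'' is not a proof. The missing idea, which is how the paper closes this step, is to trade atoms for the minimal dihedral $\A$-subgroups they generate. If $\bar L\bar b$ is an atom of $\A_{H/H_p}$ with preimage coset $Lb$ (so $H_p\le L\le H_n$), then $B:=\langle Lb\rangle$ is a dihedral $\A$-subgroup with $B\setminus H_n=Lb$, and any minimal dihedral $\A$-subgroup $K\le B$ supplies an atom $K\setminus H_n$ of $\A$ that is \emph{contained in} $Lb$. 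So your worry that $Lb$ ``may decompose as a union of several non-atom basic sets'' never fully materializes: at least one of those basic sets is always an atom of $\A$. That atom is fixed by $\varphi$ by hypothesis; since it is contained in both $Lb$ and $(Lb)^\varphi=Lb^\varphi$ (Proposition~\ref{prop:iso}(ii) with $L^\varphi=L$), and distinct cosets of $L$ are disjoint, it forces $Lb^\varphi=Lb$, hence $(\bar L\bar b)^{\varphi^*}=\bar L\bar b$. (The paper phrases this at the level of subgroups: $K^\varphi=K$ because $K=\langle K\setminus H_n\rangle$ and $\varphi$ preserves $\A$-subgroups, $KH_p=B$ by minimality of $B/H_p$, hence $B^\varphi=B$.) With this observation supplied, your plan is complete and coincides with the paper's argument.
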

\begin{proof}
We proceed by induction on $|\pi(n)|$. 
If $n$ is a prime, then $H \setminus H_n$ is an atom, or there is an atom of size $1$. In either case, it is easy to show that Eq.~\eqref{eq:goal} holds. 

Assume that $n$ is a composite number and the lemma is true when $n$ is replaced by any odd square-free number larger than $1$ and having less number of prime divisors than $n$. It is easy to see that Eq.~\eqref{eq:goal} holds if at least one of the atoms of $\A$ has size $1$. 

In the rest of the proof we assume that each atom has size larger than $1$. Consequently, each minimal $\A$-subgroup has odd prime order. Denote by $\pi^*(n)$ the set of primes $p \in \pi(n)$ for which $H_p$ is an $\A$-subgroup. Fix $p \in \pi^*(n)$ and let $H(p)$ be the $\A$-subgroup defined in Lemma~\ref{lem:H(p)}. 
We consider the S-ring $\A_{H/H_p}$ and let $\varphi^*=\varphi^{H/H_P}$, 
the isomorphism of $\A_{H/H_p}$ defined in Proposition~\ref{prop:iso}(iii). 

\begin{claim}
$\varphi^*$ fixes setwise 
\begin{enumerate}[(a)]
\item each basic set contained in $H_n/H_p$, and 
\item each atom of $\A_{H/H_p}$. 
\end{enumerate}
\end{claim}
Part (a) is obvious, and to verify part (b), recall that the atoms of $\A_{H/H_p}$ correspond to the minimal dihedral $\A_{H/H_p}$-subgroups of $H/H_p$. To settle part (b), it is sufficient to show that  
$\varphi^*$ fixes setwise each minimal dihedral $\A_{H/H_p}$-subgroups. Such a group 
can be written in the form $B/H_p$, where $B$ is a dihedral $\A$-subgroup and 
$H_p \le B$. Let $K$ be a minimal dihedral $\A$-subgroup 
contained in $B$. Then $K^\varphi=K$ because of the assumption that 
$\varphi$ fixes setwise 
the atom $K \setminus H_n$, and $KH_p=B$ because of the minimality of 
$B/H_p$. These yield that $(B/H_p)^{\varphi^*}=B^\varphi/H_p=(KH_p)^\varphi/H_p=B/H_p$, and 
this completes the proof of the claim. 
\medskip

Now, the induction hypothesis can be applied to the S-ring 
$\A_{H/H_p}$ and $\varphi^*$, and this yields that 
$X^\varphi/H_p=(X/H_p)^{\varphi^*}=X/H_p$.  Therefore, if 
$X \not\subset H(p)$, then $H_p \le \stab(X)$ due to 
Lemma~\ref{lem:mini}(ii), hence Eq.~\eqref{eq:goal} holds for $X$. 
More generally, Eq.~\eqref{eq:goal} holds whenever 
\[
X \in \bigcup_{p \in \pi^*(n)}\overline{H(p)}=\overline{\bigcap_{p \in \pi^*(n)}H(p)}.
\]
For a subset $S \subseteq H$, we denote by $\overline{S}$ the complement of $S$ 
in $H$. Let $H_0=\bigcap_{p \in \pi^*}H(p)$. Now, we are done if show that $H_0 \le H_n$.  

Assume on the contrary that $H_0 \not\le H_n$, and choose an atom $X' \in \atom(\A)$ contained 
in $H_0$. Since $\stab(X') \ne 1$,  
there is a minimal $\A$-subgroup $C_q$ contained in $\stab(X')$. 
By Lemma~\ref{lem:mini}(ii), $X'$ is outside $H(q)$. However, this 
contradicts that $q \in \pi^*(n)$ and $X' \subset H_0$. 
\end{proof}
We have modified a proof of Muzychuk in order to prove Theorem \ref{thm:solvable} which guarantees that the $\prec$-minimal overgroups of a regular dihedral groups of square-free order  are solvable and as it was proved originally by Muzychuk the same holds for regular cyclic groups of square-free order. Notice that this was one of the most important steps in Muzychuk's and our proof and the solvability is used by Morris \cite{M23} as well.

We raise the question which groups have a similar property:
\begin{question}
    What are the groups $G$ such that the $\prec$-minimal overgroups of the regular copy $\hat{G}$ are all solvable. 
\end{question}


\end{document}